\documentclass[pdflatex,sn-mathphys-num]{sn-jnl}

\usepackage{graphicx}%
\usepackage{multirow}%
\usepackage{amsmath,amssymb,amsfonts}%
\usepackage{amsthm}%
\usepackage{mathrsfs}%
\usepackage[title]{appendix}%
\usepackage{xcolor}%
\usepackage{textcomp}%
\usepackage{manyfoot}%
\usepackage{booktabs}%
\usepackage{algorithm}%
\usepackage{algorithmicx}%
\usepackage{algpseudocode}%
\usepackage{listings}%

\makeatletter
\@ifpackageloaded{geometry}%
  {\geometry{a4paper,margin=1in}}%
  {\usepackage[a4paper,margin=1in]{geometry}}
\makeatother
\usepackage{amsmath,amssymb,amsthm,mathtools}
\usepackage{booktabs}
\usepackage{subcaption}
\usepackage{microtype}
\usepackage[numbers,sort&compress]{natbib}
\makeatletter
\@ifpackageloaded{hyperref}%
  {\hypersetup{colorlinks=true,linkcolor=blue,citecolor=blue,urlcolor=blue,pdftitle={Exact Finite-Horizon Memory, Conditioning, and Dissipative Decay in Coarse Upwind Finite-Volume Prediction},pdfauthor={Antonis Polemitis, Nicholas Christakis, Dimitris Drikakis},}}%
  {\usepackage[colorlinks=true,linkcolor=blue,citecolor=blue,urlcolor=blue]{hyperref}\hypersetup{pdftitle={Exact Finite-Horizon Memory, Conditioning, and Dissipative Decay in Coarse Upwind Finite-Volume Prediction},pdfauthor={Antonis Polemitis, Nicholas Christakis, Dimitris Drikakis},}}
\makeatother
\makeatletter\let\orcidlogo\@undefined\makeatother
\usepackage{orcidlink}
\usepackage[nameinlink]{cleveref}

\newtheorem{theorem}{Theorem}
\newtheorem{lemma}{Lemma}
\newtheorem{proposition}{Proposition}
\newtheorem{corollary}{Corollary}
\newtheorem{remark}{Remark}

\newcommand{\R}{\mathbb R}
\newcommand{\epsmach}{\epsilon_{\rm mach}}
\DeclareMathOperator{\rank}{rank}

\DeclareMathOperator{\TV}{TV}
\DeclareMathOperator{\card}{card}

\theoremstyle{thmstyleone}%

\theoremstyle{thmstyletwo}%

\theoremstyle{thmstylethree}%

\begin{document}

\title[Article Title]{Exact Finite-Horizon Memory, Conditioning, and Dissipative Decay in Coarse Upwind Finite-Volume Prediction}


\author[1]{\fnm{Antonis} \sur{Polemitis}\,\orcidlink{0000-0003-3188-8746}}\email{polemitis.an@unic.ac.cy}

\author*[2]{\fnm{Nicholas} \sur{Christakis}\,\orcidlink{0000-0001-9212-2336}}\email{christakis.n@unic.ac.cy}

\author[2]{\fnm{Dimitris} \sur{Drikakis}\,\orcidlink{0000-0002-3300-7669}}\email{drikakis.d@unic.ac.cy}

\affil[1]{\orgdiv{UNIC Evolve}, \orgname{University of Nicosia}, \city{Nicosia}, \postcode{CY-2417},  \country{Cyprus}}

\affil[2]{\orgdiv{Institute for Advanced Modelling and Simulation}, \orgname{University of Nicosia}, \city{Nicosia}, \postcode{CY-2417},  \country{Cyprus}}


\abstract{Coarse finite-volume averages do not generally form a predictive state:
realized interface fluxes close a conservative update, but distinct fine-grid states with identical parent averages can generate different future coarse histories. We analyze this failure for periodic scalar
advection discretized by a first-order upwind finite-volume method with forward Euler time integration. We derive a finite-horizon observation-rank law: each additional observation exposes one new child-cell layer and contributes one fewer independent direction than
the number of parent cells, until the unresolved layers are exhausted. Centralized prediction therefore requires one fewer additional
coordinate than the number of parent cells per exposed layer, whereas product-local prediction requires one coordinate per layer in each parent. An anchored flux-divergence queue attains the centralized bound,
identifies the periodic flux gauge, and, at saturation, forms a minimal autonomous predictive state with the parent averages. We then distinguish exact observability from stable recoverability. The collar-to-queue map becomes rapidly ill-conditioned as the Courant number decreases, so algebraically visible delayed information may fall below a prescribed
numerical tolerance. For Courant numbers strictly between zero and one, we prove contraction of the nonconstant component under bounded arithmetic perturbations, with separate control of mean drift, an
explicit perturbation neighborhood, and a grid-dependent decay time. Numerical experiments illustrate the rank ladder, effective-rank loss, queue conditioning, step-function flattening, and delayed coarse separation followed by dissipative decay. The results provide a solvable benchmark for assessing state sufficiency in coarse, reduced, multiscale, and learned scientific models.}

\keywords{Finite-volume methods, Coarse graining, Predictive memory, Observability, Numerical dissipation, Stable recovery}


\pacs[MSC Classification]{65M08, 65F35, 93B07, 65M12}

\maketitle

\color{black}
\section{Introduction} \label{intro}

Finite-volume methods evolve cell averages through numerical fluxes at cell
interfaces \cite{LeVeque:2002fvm,EymardGallouetHerbin:2000fvm,
MoukalledManganiDarwish:2016fvm}.  This conservative structure is one reason
why finite-volume schemes remain central in computational fluid dynamics (CFD),
shock capturing, adaptive mesh refinement, and hyperbolic conservation laws
\cite{CrandallMajda:1980monotone,Berger:1987interfaces,
BergerColella:1989amr,KurganovTadmor:2000central,
GottliebShuTadmor:2001ssp}.  A related question arises whenever a fine
calculation is restricted, sampled, reconstructed, or replaced on a coarser
grid: do the coarse cell averages alone contain enough information to
predict their own future evolution?

In general, the answer is no.  A conservative restriction fixes the amount
of material in a parent cell but not where that material lies inside the
parent.  If a parent cell consists of two fine cells, then the fine states
\[
  (1,0)
  \qquad\hbox{and}\qquad
  (0,1)
\]
have the same parent average.  For right-moving upwind advection, however,
the second state has material closer to the right parent interface and can
affect the neighboring parent sooner.  The parent average is therefore not an
exact predictive state.

This observation is simple, but its consequences are easy to obscure.  In
coarse/fine coupling, a completed fine step can be made conservative on the
coarse grid by communicating realized interface fluxes.  This is the
mechanism behind refluxing and related multilevel corrections
\cite{OsherSanders:1983localtime,ConstantinescuSandu:2007multirate,Thornber:2008entropy}.  Such a
realized flux is a one-step message.  It does not generally provide all
future fluxes, unless additional subcell information or a predictive memory
state is retained.

The same distinction appears in reduced modeling and large-eddy simulation.
Filtering or averaging produces unresolved stresses, fluxes, or memory
terms.  Mori--Zwanzig \cite{mori1965,zwanzig1961} and optimal-prediction viewpoints make this explicit:
eliminating variables generally produces non-Markovian resolved dynamics
\cite{ChorinHaldKupferman:2002memory,ParishDuraisamy:2017nonmarkovian,
GouasmiParishDuraisamy:2017memory,LinTianLivescuAnghel:2021mzlearning}.
Recent finite-volume closure work also emphasizes that exact unresolved
terms can be written down for a declared discretization, even if producing or
approximating them remains difficult \cite{Agdestein:2026exactfvclosure}.

The issue has become more visible as machine learning methods have been used
for flow super-resolution, spatiotemporal forecasting, and data-driven
finite-volume or flux-form prediction
\cite{DuraisamyIaccarinoXiao:2019data,KarniadakisEtAl:2021piml,
VinuesaBrunton:2022cfdml,BarSinai:2019datadriven,KochkovEtAl:2021mlcfd,
ZhuangEtAl:2021learnedadvection,deRomemontEtAl:2026learnedfv,
Gao:2021physicsinformed,Gao:2023bayesian,
Sofos:2025spatiotemporal,
Liu:2026entropyCFN,Charles:2025dspweno}.  In such settings, a coarse input
may be compatible with many fine states.  Some hidden fine-scale differences
may matter for future fluxes, while others may decay because of physical or
numerical dissipation.  The distinction is essential: exact algebraic
information, stable recoverability, and practical finite-precision relevance
are different notions.

The present paper isolates these notions in a deliberately simple model:
periodic scalar advection with first-order positive upwind forward Euler.
The model is simple enough to allow exact theorems, but rich enough to show
the main mechanism.  Our contribution is not a broad impossibility theorem
for turbulence, Euler flows, or learned closures.  Instead, it is a precise
finite-volume calculation showing what is true in the scalar upwind
hierarchy, and why exact rank must be interpreted together with conditioning
and dissipation.

\subsection{Contributions}

Observability, minimal realization, refluxing, unresolved-variable memory,
and upwind numerical dissipation are individually classical.  The
contribution here is their exact finite-volume synthesis for a declared
coarse/fine hierarchy: the horizon-by-horizon dimension count,
locality-dependent lower bounds, attaining queue coordinates, periodic gauge
reduction, and quantitative separation of algebraic visibility from stable
and long-time relevance. The paper makes five contributions.

\begin{enumerate}
\item We establish the realized-flux identity as the classical one-step
baseline: once the integrated interface fluxes of a completed fine step are
known, parent averages update exactly by flux telescoping.  This explains why
refluxing works, but also why a completed flux is not automatically a
recursive predictive state.

\item We prove the exact finite-horizon observation-rank formula
\[
  \rank {\cal O}_L =
  P+(P-1)\min\{L,r-1\}
\]
for a periodic scalar upwind hierarchy with \(P\) parents and \(r\) children
per parent.  This yields continuous centralized and parent-local memory
lower bounds.

\item We construct attaining predictive coordinates: raw local queues attain
the parent-local bounds, while an anchored flux-divergence queue attains the
centralized bound.  At saturation, the resulting state is autonomous and has
minimal total dimension among continuous autonomous encoders on open support.

\item We quantify stable observability.  The queue map for delayed subcell
information is triangular with diagonal entries
\(\lambda,\lambda^2,\ldots,\lambda^q\).  Hence exact rank can overstate what
is recoverable in finite precision when \(\lambda^q\) approaches the
declared singular-value tolerance.

\item We prove dissipative decay under bounded arithmetic perturbations.  For
\(0<\lambda<1\), the nonconstant fine-grid component contracts toward an
explicit perturbation-controlled neighborhood of the computed mean, while
the computed mean satisfies a separate accumulated-drift estimate.  The
endpoint \(\lambda=1\) is nondissipative.
\end{enumerate}




\color{black}
\subsection{Relation to turbulence, multiscale physics, and scientific computing}

This issue is especially consequential in turbulence, where coarse representation is not
simply a reduction in mesh resolution. Filtering removes nonlinear scale interactions whose
influence re-enters the resolved equations through subgrid-scale stresses and, more generally,
through history-dependent closure terms. Classical large-eddy simulation (LES) made this
dependence explicit through eddy-viscosity closure and its dynamic refinement
\cite{smagorinsky1963,germano1991}, while projection-operator formulations show that
eliminating unresolved degrees of freedom generically produces memory and fluctuating
forces \cite{zwanzig1961,mori1965}. The present analysis does not claim a
Navier--Stokes turbulence closure, nor that its scalar rank formula transfers unchanged to
turbulent flow. Rather, it provides a fully solvable finite-volume benchmark in which three
requirements that are often conflated in turbulence capture are separated exactly: the
dimension of the information required for pathwise coarse prediction, the conditioning with
which that information can be reconstructed, and the time over which it remains dynamically
relevant under dissipation. It thereby provides a precise example of why conservation or
high-fidelity reconstruction of coarse observables need not determine the future interface
fluxes of the same realization, and why algebraically necessary subgrid information may
nevertheless be poorly recoverable or rapidly attenuated at a prescribed tolerance.

The same state-sufficiency question extends beyond classical CFD to scientific-computing
methods that advance reduced observables while delegating hidden degrees of freedom to a
fine-scale simulator, a reduced representation, or a learned closure. Equation-free and
heterogeneous multiscale methods use short microscopic simulations to supply or advance
macroscopic information \cite{kevrekidis2003,eengquist2003}; proper orthogonal
decomposition and dynamic mode decomposition seek low-dimensional coordinates for
coherent dynamics \cite{sirovich1987,schmid2010}; and sparse system identification and data-driven discretization infer governing dynamics or numerical operators from observations
\cite{brunton2016, BarSinai:2019datadriven}. In these settings, the exact rank and attaining
flux-divergence queue derived here provide a benchmark for state sufficiency, while the
singular-value and dissipative estimates impose two additional requirements: the retained
state must be stably recoverable and must persist over the intended rollout horizon. The
resulting perspective suggests a general validation protocol for reduced, multiscale, and
learned models of physical systems: specify the retained observables, prediction horizon,
locality or communication architecture, recovery tolerance, and dissipation mechanism before
asserting closure, Markovianity, or autonomous prediction.

\color{black}
Active Flux methods provide a complementary finite-volume perspective by
evolving interface point values together with cell averages
\cite{Barsukow:2019activeflux,Calhoun:2023activeflux}.  Their additional interface degrees of freedom are part of a designed high-order discretization.  The queue derived here has a different status: it is not proposed as a new discretization, but is the sharp finite-horizon information register for a fixed first-order upwind hierarchy.  Moreover, centralized
periodic prediction depends only on flux divergences and therefore removes
one spatially uniform interface-flux gauge at every queue time.

The scalar upwind problem below is therefore used as a clean reference
problem where exact closure dimension, conditioning, and dissipative loss can
all be computed.  The paper does not claim that the scalar rank formula transfers unchanged to turbulence, nonlinear conservation laws, or general learned closures.

This is useful precisely because it separates questions that are easy to
mix:
\begin{itemize}
\item exact pathwise closure: what information determines the future coarse
trajectory of the same fine realization?
\item stable numerical closure: can that information be recovered with
reasonable conditioning?
\item finite-precision relevance: does unresolved information remain above a declared numerical tolerance or perturbation floor after a dissipative numerical method has acted?
\end{itemize}

\paragraph{Organization.}
The rest of the paper is organized as follows.  \Cref{sec:fv_closure}
states the one-step finite-volume flux-closure identity and separates
completed flux communication from recursive prediction.  \Cref{sec:periodic}
defines the periodic scalar upwind hierarchy used in the analysis.
\Cref{sec:exact_fh} proves the exact finite-horizon memory theorem, including
centralized and parent-local encoder lower bounds and the flux-divergence
queue construction.  \Cref{sec:stable-observability} studies stable
observability and defines the effective floating-point rank used in the
experiments.  \Cref{sec:diss_eras} proves dissipative erasure for the
nonconstant component of the upwind solution.  \Cref{sec:num_exp} reports the
rank, singular-value, conditioning, step-flattening, delayed-collision, and
flux-closure experiments.  \Cref{sec:discussion} interprets the results and \Cref{sec:limitations} summarizes limitations and future work before the
conclusions in \Cref{sec:conc}.

\section{Finite-volume closure and one-step flux messages} \label{sec:fv_closure}

Let a one-dimensional fine grid be partitioned into contiguous parent cells.
Fine cell \(i\) has width \(h_i>0\), and parent \(K\) has total width
\[
  H_K=\sum_{i\subset K} h_i .
\]
The restriction \(R\) maps fine cell averages \(U_i\) to parent averages:
\[
  (RU)_K =
  \frac{1}{H_K}\sum_{i\subset K} h_i U_i .
\]
Many fine states have the same parent averages.  Exact coarse prediction is
therefore a factorization question: can the restricted fine update be written
as a function of the declared coarse information?

The first observation is that one completed conservative fine step is always
closed by the realized integrated fluxes through parent interfaces.

\begin{proposition}[Realized-flux one-step closure]
\label{prop:flux-closure}
Suppose one fine finite-volume step has conservative form
\[
  U_i^{n+1}
  =
  U_i^n
  -
  \frac{\Phi_{i+1/2}-\Phi_{i-1/2}}{h_i},
\]
where \(\Phi_{i+1/2}\) is the realized time-integrated numerical flux through
face \(i+1/2\).  Let \(Q_K\) denote the realized flux through parent interface
\(K+1/2\).  Then the parent update satisfies
\[
  (RU^{n+1})_K
  =
  (RU^n)_K
  -
  \frac{Q_K-Q_{K-1}}{H_K}.
\]
\end{proposition}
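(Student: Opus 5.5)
The plan is a direct telescoping computation. Multiply the fine update by \(h_i\) to obtain the conservative form
\[
  h_i U_i^{n+1} = h_i U_i^n - \bigl(\Phi_{i+1/2}-\Phi_{i-1/2}\bigr),
\]
and then sum over all fine cells \(i\subset K\). By the definition of the restriction \(R\), the left-hand side becomes \(H_K (RU^{n+1})_K\) and the first term on the right becomes \(H_K (RU^n)_K\). The only work is in the flux sum.

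Since the parent \(K\) is a contiguous block of fine cells, write it as \(i=a_K,\dots,b_K\). The sum
\[
  \sum_{i=a_K}^{b_K}\bigl(\Phi_{i+1/2}-\Phi_{i-1/2}\bigr)
\]
telescopes to \(\Phi_{b_K+1/2}-\Phi_{a_K-1/2}\). Each interior face of the parent appears once with a plus sign, from the cell on its left, and once with a minus sign, from the cell on its right, so all interior contributions cancel.

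It remains to identify the two surviving faces with parent interfaces. The right face \(b_K+1/2\) is the parent interface \(K+1/2\), so \(\Phi_{b_K+1/2}=Q_K\) by definition. Because the parents tile the grid contiguously, \(a_K-1=b_{K-1}\), so the left face \(a_K-1/2\) is the parent interface \(K-1/2\) and \(\Phi_{a_K-1/2}=Q_{K-1}\). In the periodic setting this identification is made with the indices of \(K\) taken modulo the number of parents.

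Dividing by \(H_K>0\) then gives the stated identity. I expect no real obstacle. The only care needed is in the indexing: the realized flux must be single-valued on each shared face, so that neighbouring parents use the same \(Q\) and the cancellation is exact. This is precisely the conservative-form hypothesis, and no property of the particular numerical flux is used.
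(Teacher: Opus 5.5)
Your proposal is correct and is essentially the paper's own proof: multiply the fine update by $h_i$, sum over the children of parent $K$, let the interior faces telescope, identify the two surviving faces with $Q_K$ and $Q_{K-1}$, and divide by $H_K$. Your explicit bookkeeping with $a_K,b_K$ and the contiguity relation $a_K-1=b_{K-1}$ (taken modulo the number of parents in the periodic case) just spells out the face identification the paper states in one line.
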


\begin{proof}
Multiply the fine update by \(h_i\) and sum over the children of parent
\(K\):
\[
  \sum_{i\subset K}h_i U_i^{n+1}
  =
  \sum_{i\subset K}h_i U_i^n
  -
  \sum_{i\subset K}(\Phi_{i+1/2}-\Phi_{i-1/2}).
\]
All internal fine faces cancel. Only the two exterior parent faces remain, giving
\[
  \sum_{i\subset K}h_i U_i^{n+1}
  =
  \sum_{i\subset K}h_i U_i^n
  -
  (Q_K-Q_{K-1}).
\]
Division by \(H_K\) proves the claim.
\end{proof}

This proposition explains the difference between a one-step conservative
message and a recursive predictive state.  The realized interface register \(Q_K\) can be
sufficient after the fine computation has already produced it.  It need not
determine future realized fluxes.  The rest of the paper studies the memory
needed for repeated prediction in the scalar upwind hierarchy.

\section{Periodic scalar upwind hierarchy} \label{sec:periodic}

The discrete hierarchy below corresponds to the constant-speed advection
equation
\[
  u_t+a u_x=0,
  \qquad a>0,
\]
on a periodic one-dimensional domain.  Let \(h\) be the fine-cell width,
\(\Delta t\) the time step, and
\[
  \lambda=\frac{a\Delta t}{h}
\]
the Courant number.  The first-order upwind finite-volume method with
forward Euler time integration is
\[
  u_i^{n+1}
  =
  (1-\lambda)u_i^n+\lambda u_{i-1}^n,
  \qquad 0<\lambda\le1,
\]
with indices understood periodically.  In the algebra below we use unit
fine-cell widths, which amounts to setting \(h=1\) in the restriction
operator.

Let \(P\ge2\) be the number of parent cells and let each parent contain
\(r\ge2\) fine cells.  The total number of fine cells is
\[
  N=Pr.
\]
Write the fine state as
\[
  x=(x_{K,j}),
  \qquad
  K=0,\ldots,P-1,
  \qquad
  j=0,\ldots,r-1.
\]
All parent indices are interpreted modulo \(P\), so that \(K=-1\) means
\(K=P-1\) and \(K=P\) means \(K=0\).

The parent average operator \(R:\R^{Pr}\to\R^P\) is
\[
  (Rx)_K=\frac1r\sum_{j=0}^{r-1}x_{K,j}.
\]
Let \(S\) be the downstream cyclic shift,
\[
 (Sx)_{K,j}
 =
 \begin{cases}
   x_{K,j-1}, & j\ge1,\\
   x_{K-1,r-1}, & j=0.
 \end{cases}
\]
The positive upwind forward-Euler step is
\[
  A_\lambda=(1-\lambda)I+\lambda S,
  \qquad
  0<\lambda\le1.
\]
For prediction horizon \(L\ge0\), define the observation matrix
\[
  {\cal O}_L =
  \begin{bmatrix}
  R\\
  RA_\lambda\\
  \vdots\\
  RA_\lambda^L
  \end{bmatrix}.
\]
The matrix \({\cal O}_L\) is a finite-horizon observation matrix.  It maps
one initial fine-grid state to the sequence of parent averages observed from
time \(0\) through time \(L\).  Each block \(RA_\lambda^t\) has size
\(P\times Pr\), so
\[
  {\cal O}_L\in\mathbb R^{(L+1)P\times Pr}.
\]
Thus every additional observed time step adds \(P\) rows, one for each
parent average at that time.  The number of rows grows with \(L\), but the
rank need not.  In the upwind hierarchy studied here, the rank grows only
until the exposed horizon reaches \(r-1\) child layers, after which later
rows are linear combinations of earlier rows.

The vector \({\cal O}_Lx\) is the sequence of parent averages generated by
the same realized fine state \(x\) from time \(0\) through time \(L\).

We use three notions throughout the paper.  First, \emph{exact memory}
means the additional real-valued information, beyond the current parent
averages, required to reconstruct the parent-average history
\({\cal O}_Lx\) for every fine state in a declared support.  Second,
\emph{stable observability} refers to whether the algebraically visible
directions are visible above a declared singular-value tolerance in
floating-point arithmetic.  Third, \emph{dissipative erasure} means decay of
the nonconstant component of the computed solution, relative to its computed
mean, below a prescribed numerical tolerance or perturbation floor.  These
notions are distinct: exact memory is an algebraic dimension question,
stable observability is a conditioning question, and dissipative erasure is
a long-time decay question.

\section{Exact finite-horizon memory} \label{sec:exact_fh}

We first state a standard dimension lower bound.  It converts an observation
rank into a lower bound on any continuous real-valued encoder. For an open support \(\Omega\subset\mathbb R^{Pr}\), define the centralized
finite-horizon exact excess memory dimension by
\[
  d^{\rm cen}_L(\Omega)
  =
  \inf
  \left\{
   d\in\mathbb N_0:
  \begin{array}{l}
  \text{there exist a continuous }M:\Omega\to\mathbb R^d
  \text{ and a decoder }G\\
  \text{such that }
  {\cal O}_Lx=G(Rx,M(x))\text{ for every }x\in\Omega
  \end{array}
  \right\}.
\]
This quantity counts real-valued coordinates beyond the parent averages.  It
is not a bit-complexity, communication-cost, or statistical learning
quantity.

For a product-local architecture, assume
\[
  \Omega=\Omega_0\times\cdots\times\Omega_{P-1},
  \qquad
  \Omega_K\subset\mathbb R^r\text{ open}.
\]
A vector \(m=(m_0,\ldots,m_{P-1})\) is called locally admissible at horizon
\(L\) if there exist continuous parent-local maps
\[
  M_K:\Omega_K\to\mathbb R^{m_K},
  \qquad K=0,\ldots,P-1,
\]
and a decoder \(G\) such that
\[
  {\cal O}_Lx
  =
  G\bigl(Rx,M_0(x_0),\ldots,M_{P-1}(x_{P-1})\bigr)
  \qquad
  \text{for every }x\in\Omega.
\]
The terms encoder and decoder are used here in a finite-dimensional
mathematical sense, not in the Shannon-information sense.  An encoder is an
auxiliary memory map \(M\) that extracts additional real-valued coordinates
from the fine state, beyond the parent averages.  A decoder is a prediction
map \(G\) that uses the parent averages and this auxiliary memory to
reconstruct the desired parent-average history.  The lower bounds below
concern Euclidean state dimension for continuous maps on open sets; they are
not bit-complexity, entropy, mutual-information, or channel-capacity
statements.  Continuity is required of the encoder; the decoder is only
assumed to be defined on the attained encoded data.

The theorem below determines both the centralized dimension and the
coordinatewise local lower bound.

\begin{lemma}[Continuous encoder lower bound]
\label{lem:encoder}
Let \(H:\mathbb R^n\to\mathbb R^m\) be linear, and let
\(\Omega\subset\mathbb R^n\) contain a nonempty open set.  Let
\(C:\mathbb R^n\to\mathbb R^p\) be linear.  Suppose there are a continuous
map \(M:\Omega\to\mathbb R^d\) and a decoder \(G\), defined on the attained
image, such that
\[
  Hx=G(Cx,M(x)),
  \qquad x\in\Omega.
\]
Then
\[
  \rank C+d\ge \rank H.
\]
\end{lemma}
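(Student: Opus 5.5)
We reduce the inequality to invariance of domain, or equivalently to the fact that no continuous injection maps an open subset of \(\mathbb R^k\) into \(\mathbb R^{j}\) with \(j<k\).

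Fix a nonempty open ball \(B\subset\Omega\) and a point \(x_0\in B\). Let \(W=\ker C\). Choose a linear subspace \(V\subset W\) on which \(H\) is injective, with
\[
  \dim V=\rank H-\rank(H|_{\ker C})\ \text{adjusted as follows.}
\]
The clean choice is this. Since \(\rank\begin{bmatrix}H\\ C\end{bmatrix}\ge \rank H\), we have
\[
  \rank(H|_{\ker C}) = \rank\begin{bmatrix}H\\C\end{bmatrix}-\rank C \ge \rank H-\rank C.
\]
So pick \(V\subset\ker C\) with \(H|_V\) injective and \(k:=\dim V\ge \rank H-\rank C\). If \(k\le 0\), then \(\rank H-\rank C\le 0\le d\) and there is nothing to prove, so assume \(k\ge1\).

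Consider the open set \(U=\{v\in V: x_0+v\in B\}\), which is nonempty and open in \(V\cong\mathbb R^k\). On \(U\) we have \(C(x_0+v)=Cx_0\), which is constant. The identity \(Hx=G(Cx,M(x))\) therefore gives
\[
  H(x_0+v)=G\bigl(Cx_0,M(x_0+v)\bigr),\qquad v\in U.
\]
The key step follows. If \(M(x_0+v)=M(x_0+v')\) for \(v,v'\in U\), then the right-hand sides agree, so \(Hv=Hv'\). Injectivity of \(H|_V\) then forces \(v=v'\). Hence \(\phi(v):=M(x_0+v)\) is a continuous injection from the open set \(U\subset\mathbb R^k\) into \(\mathbb R^d\). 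Note that the decoder \(G\) needs no regularity at all; it only has to be a well-defined function on the attained image.

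The main topological step is to conclude \(d\ge k\). Suppose \(d<k\). Compose \(\phi\) with the inclusion \(\mathbb R^d\hookrightarrow\mathbb R^k\) as the first \(d\) coordinates. This gives a continuous injection \(U\to\mathbb R^k\) whose image lies in a proper linear subspace, so the image has empty interior. This contradicts Brouwer's invariance of domain, which says a continuous injection of an open subset of \(\mathbb R^k\) into \(\mathbb R^k\) is open. Hence \(d\ge k\ge\rank H-\rank C\), i.e.\ \(\rank C+d\ge\rank H\).

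The only subtle point is this use of invariance of domain. One could instead try to use Borsuk--Ulam on a small sphere in \(U\), but invariance of domain is the standard citation. Everything else is linear algebra: the rank identity for the restriction to \(\ker C\), together with the observation that fixing \(Cx\) eliminates the dependence of the decoder on its first argument.
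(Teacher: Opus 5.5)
Your proof is correct, but it slices differently from the paper.

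\textbf{The paper's route.} It takes an affine copy of a complement \(V\) of \(\ker H\), so \(\dim V=\rank H\). After identifying the range of \(C\) with \(\mathbb R^{\rank C}\), it applies invariance of domain to the pair \(x\mapsto(Cx,M(x))\). Equal pairs force equal \(H\)-values through the decoder, hence equal points, and this gives \(\rank H\le\rank C+d\) directly.

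\textbf{Your route.} You restrict to a fiber \(x_0+\ker C\), on which \(C\) is constant. There only \(M\) has to be injective, on a slice of dimension \(\rank(H|_{\ker C})=\rank\begin{bmatrix}H\\ C\end{bmatrix}-\rank C\).

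\textbf{What each buys.} Your version avoids coordinatizing the range of \(C\). It also yields the formally sharper inequality \(d\ge\rank\begin{bmatrix}H\\ C\end{bmatrix}-\rank C\), which can be strictly stronger: for \(Hx=x_1\) and \(Cx=x_2\) it gives \(d\ge1\) rather than \(d\ge0\). The price is the extra rank identity. The paper's version is shorter and treats the parent averages and the memory symmetrically. It would give the same sharper bound if \(V\) were chosen complementary to \(\ker H\cap\ker C\). In every use in the paper the two bounds coincide, since either \(C=0\) or \(C=R\) is the first block of \({\cal O}_L\).

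\textbf{One editorial fix.} Delete the abandoned display \(\dim V=\rank H-\rank(H|_{\ker C})\). As written it is wrong, and your argument uses only the subsequent ``clean choice''.
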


\begin{proof}
Let \(k=\rank H\) and \(c=\rank C\).  Replace \(C\) by coordinates on its
range, so that \(Cx\) is viewed as an element of a \(c\)-dimensional linear
space.  Choose a \(k\)-dimensional linear complement \(V\) of \(\ker H\).  On
a small relative-open ball in an affine copy of \(V\), the map \(H\) is
injective.  If two points in that ball have the same \((C,M)\) value, then
the decoder gives them the same \(H\) value, and hence the two points are
equal.  Thus \((C,M)\) continuously injects an open subset of \(\mathbb R^k\) into \(\mathbb R^{c+d}\).  By invariance of domain, \(c+d\ge k\).
\end{proof}

\begin{theorem}[Finite-horizon upwind memory]
\label{thm:rank}
Let \(P\ge2\), \(r\ge2\), \(L\ge0\), and \(0<\lambda\le1\).  Put
\[
  q=\min\{L,r-1\}.
\]
Then
\[
  \rank {\cal O}_L
  =
  P+(P-1)q.
\]
Consequently, the following encoder lower bounds hold.  First, any
continuous centralized encoder that reconstructs \({\cal O}_Lx\) from \(Rx\)
on an open set requires at least
\[
  (P-1)q
\]
additional real coordinates.

Second, suppose the support has product form
\[
  \Omega=\Omega_0\times\cdots\times\Omega_{P-1}
  \subset (\mathbb R^r)^P,
\]
where each \(\Omega_K\subset\mathbb R^r\) is open, and suppose the message is
a product of continuous parent-local producers,
\[
  M(x)=\bigl(M_0(x_0),\ldots,M_{P-1}(x_{P-1})\bigr),
  \qquad
  M_K:\Omega_K\to\mathbb R^{m_K},
\]
with
\[
  x_K=(x_{K,0},\ldots,x_{K,r-1}).
\]
If a decoder reconstructs \({\cal O}_Lx\) from \(Rx\) and \(M(x)\) for all
\(x\in\Omega\), then
\[
  m_K\ge q
  \qquad
  \text{for every }K=0,\ldots,P-1.
\]

Moreover, the centralized lower bound is attained by the anchored flux-divergence queue
\[
  \bigl(JB\Phi_0,\ldots,JB\Phi_{q-1}\bigr)
  \in\R^{(P-1)q},
\]
where
\[
  \Phi_{K,t}=\lambda(A_\lambda^t x)_{K,r-1},
  \qquad
  (B\Phi_t)_K=\Phi_{K,t}-\Phi_{K-1,t},
\]
and \(J\) deletes one coordinate of a zero-sum periodic vector.
\end{theorem}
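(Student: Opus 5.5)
The rank formula will come from the flux telescoping of Proposition~\ref{prop:flux-closure}, applied to the fine upwind step. With unit widths and $\Phi_{K,t}=\lambda(A_\lambda^t x)_{K,r-1}$ as the flux through the parent interface $K+1/2$ at step $t$, we have $r\,RA_\lambda^{t+1}x = r\,RA_\lambda^t x - B\Phi_t$. The row space of ${\cal O}_L$ is therefore spanned by the rows of $R$ together with the row functionals $x\mapsto (B\Phi_t)_K$ for $t=0,\ldots,L-1$.

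The next step is to show that $\Phi_{K,t}$ depends only on the last $t+1$ children of parent $K-0$ and of the upstream parents, and to isolate a triangular part. Expanding $A_\lambda^t=\sum_s\binom ts(1-\lambda)^{t-s}\lambda^sS^s$ gives
\[
\Phi_{K,t}=\lambda\sum_{s=0}^{t}\binom ts(1-\lambda)^{t-s}\lambda^s x_{K,r-1-s}
\]
whenever $t\le r-1$, since no shift crosses a parent boundary. Hence $\Phi_{K,t}$ involves only children of parent $K$, and its coefficient on the deepest new child $x_{K,r-1-t}$ is $\lambda^{t+1}\neq0$. Ordering the coordinates by depth, the functionals $\{x\mapsto x_{K,r-1-s}\}_{s\le q-1}$ and $\{\Phi_{K,t}\}_{t\le q-1}$ span the same space, by a triangular change of basis with diagonal $\lambda^{t+1}$. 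Applying $B$ to these per-parent flux vectors gives a space of dimension $(P-1)q$: $B$ has kernel equal to the constants, and since different layers $t$ use disjoint coordinates, the only loss is one constant direction per layer. Adjoining the $P$ rows of $R$ adds $P$ dimensions. To see independence, note that $R$ involves the sums $\sum_j x_{K,j}$, which include the unexposed child $x_{K,0}$ (because $q\le r-1$), while no flux functional of layer $t<q$ touches $x_{K,0}$. Write a putative relation and look at the coefficient of $x_{K,0}$: it forces the $R$ part to vanish, and what remains reduces to the $B$-independence just established. This gives $\rank{\cal O}_L\ge P+(P-1)q$ for $L\le r-1$, and equality for those $L$ because the spanning set has exactly that size.

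For $L\ge r$, I need the rows at $t\ge r$ to add nothing. I expect this to be the main obstacle, since $\Phi_{K,t}$ for $t\ge r-1$ begins to involve upstream parents. My first attempt uses $\Phi_{K,t}=\lambda e_{K,r-1}^\top A_\lambda^t x$ and writes $A_\lambda^{t}$ in powers of $(S-I)$. The key observation is that $R S^{r} = \Pi R$ for the parent shift $\Pi$, i.e.\ shifting by a whole parent block commutes with averaging. Then $R A_\lambda^L$ for $L\ge r$ can be expressed through $R$, $RA_\lambda,\ldots,RA_\lambda^{r-1}$: the polynomial $(1-\lambda+\lambda z)^L$ is reduced modulo $z^r-\pi$, and this reduction is valid on the row space because $RS^r=\Pi R$ and $\Pi$ commutes with everything. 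Dimension counting already bounds the total rank by $\dim\operatorname{span}\{RS^s:0\le s\le r-1\}\le P r$. A finer check, via the Fourier decomposition in the parent index, should give exactly $P+(P-1)(r-1)$. Under the discrete Fourier transform in $K$, each nonzero mode contributes at most $r-1$ independent directions, because the constant fine mode is invariant, while the zero mode contributes only the mean. Equivalently, $\ker{\cal O}_{r-1}$ is $A_\lambda$-invariant: it consists of the zero-mean-per-parent states whose exposed-layer fluxes have constant, hence gauge, differences. I would verify this invariance directly, which closes the saturation.

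The encoder bounds then follow from Lemma~\ref{lem:encoder}. In the centralized case, take $H={\cal O}_L$ and $C=R$, which gives $d\ge P+(P-1)q-P$. For the local bound, fix all parents except $K$ at interior points and vary only $x_K$ in $\Omega_K$, with $x_K$ restricted to the affine set where its average is fixed. The other messages are then constant, and the history still determines $\Phi_{K,t}-\Phi_{K-1,t}$ with $\Phi_{K-1,t}$ fixed, so it determines $\Phi_{K,0},\ldots,\Phi_{K,q-1}$, which are $q$ independent functionals. Since $R$ is constant on this slice, Lemma~\ref{lem:encoder} with $C$ constant gives $m_K\ge q$. Attainment is immediate from the telescoping identity: $JB\Phi_t$ determines $B\Phi_t$ because its coordinates sum to zero. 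Together with $Rx$, this recursively yields $RA^{t}x$ for $t\le q$, and for $L>q$ the saturation step extends it to all later times.
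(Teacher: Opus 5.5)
Your argument is in substance the paper's. After the triangular collar-to-queue change of basis, the telescoped differences $B\Phi_t$ become the paper's layer differences $D_s=r(RS^{s+1}-RS^s)$; this works because the change of basis has the same coefficients in every parent and so commutes with $B$. Your child-$0$ coefficient argument replaces the paper's explicit $\pm r^{-P}$ minor. Your single-parent slice is the paper's local argument: on it, $\Phi_{K,0},\ldots,\Phi_{K,q-1}$ are linear in children $1,\ldots,r-1$, so \Cref{lem:encoder} applies with $C=0$. Finally, $RS^r=\Pi R$ is the paper's saturation observation. One phrase should be repaired: the fluxes $\Phi_{\cdot,t}$ for different $t$ do not use disjoint coordinates. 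It is the layer differences $x_{K,r-1-s}-x_{K-1,r-1-s}$ produced by that uniform triangular change that do.

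What needs correcting is the saturation paragraph. There you treat a finished step as open and then propose a count that is false.

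Reducing $(1-\lambda+\lambda z)^L$ modulo $z^r-\pi$ gives $RA_\lambda^L=\sum_{s<r}g_s(\Pi)RS^s$. Left multiplication by $g_s(\Pi)$ keeps rows in the row space of $RS^s$. Since $S=\lambda^{-1}(A_\lambda-(1-\lambda)I)$, those rows lie in $\operatorname{rowspace}{\cal O}_{r-1}$. Hence $\operatorname{rowspace}{\cal O}_L=\operatorname{rowspace}{\cal O}_{r-1}$ for $L\ge r-1$. Moreover, $\rank{\cal O}_{r-1}=P+(P-1)(r-1)$ is already the case $L=r-1$ of your first part, since only fluxes with $t\le r-2$ enter and none touches child $0$. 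The bound by $Pr$ is irrelevant, and no ``finer check'' is needed.

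The Fourier heuristic you offer would actually give the wrong number. Since ${\cal O}_LS^r=(I_{L+1}\otimes\Pi){\cal O}_L$, the rank splits over parent-Fourier modes. The kernel ${\cal N}_\infty$ of \Cref{cor:invisible-subspace} (parent-periodic zero-sum profiles) lies entirely in the zero mode. So the zero mode contributes $1$, and each of the $P-1$ nonzero modes contributes all $r$ directions, giving $1+(P-1)r=P+(P-1)(r-1)$. Your ``at most $r-1$ per nonzero mode'' gives $1+(P-1)(r-1)$, which is short by $P-1$.

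Your alternative claim, that $\ker{\cal O}_{r-1}={\cal N}_\infty$ is $A_\lambda$-invariant, is true: these are the $S^r$-fixed vectors with zero mean, and $A_\lambda$ commutes with $S^r$ and preserves the mean. It would also close the step, but it is redundant.

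The same reduction formula, rewritten in powers of $A_\lambda$, is the explicit linear extension from $y_0,\ldots,y_{r-1}$ to later $y_L$ that your attainment argument needs when $L>q$. It plays the role of the paper's recurrence from $(A_\lambda-(1-\lambda)I)^r=\lambda^rS^r$.
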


\begin{proof}
Since \(I\) and \(S\) commute,
\[
  A_\lambda^t
  =
  \sum_{j=0}^{t}
  \binom{t}{j}
  (1-\lambda)^{t-j}\lambda^j S^j .
\]
The change of variables from
\((R,RS,\ldots,RS^L)\) to
\((R,RA_\lambda,\ldots,RA_\lambda^L)\) is triangular with nonzero diagonal
entries \(1,\lambda,\ldots,\lambda^L\).  Therefore these two families have
the same row span.

For \(0\le t\le r-2\), define
\[
  D_t=r(RS^{t+1}-RS^t).
\]
Then
\[
  (D_t x)_K
  =
  x_{K-1,r-1-t}-x_{K,r-1-t}.
\]
Thus each new shift exposes one new child layer through the periodic
difference operator.  The \(P\) rows of \(D_t\) sum to zero, and hence
\[
  \rank D_t\le P-1.
\]
Moreover, for \(t\ge1\),
\[
  RS^t
  =
  R+\frac1r\sum_{s=0}^{t-1}D_s.
\]
Therefore, for \(L\le r-1\),
\[
  \rank\operatorname{span}\{R,RS,\ldots,RS^L\}
  \le
  P+L(P-1).
\]
If \(L>r-1\), then \(S^r\) shifts by one parent cell, so later rows
\(RS^t\), \(t\ge r\), lie in the row span of
\[
  R,RS,\ldots,RS^{r-1}.
\]
Consequently, for all \(L\ge0\),
\[
  \rank\operatorname{span}\{R,RS,\ldots,RS^L\}
  \le
  P+q(P-1),
  \qquad
  q=\min\{L,r-1\}.
\]
We now construct a matching independent set of rows.

Let
\[
  j_\ell=r-1-\ell,
  \qquad
  \ell=0,\ldots,q-1,
\]
be the \(q\) exposed right-collar layers.  Since \(q\le r-1\), none of these
layers is \(j=0\).  For each exposed layer \(j_\ell\), take the \(P-1\)
difference rows
\[
  D_{\ell,K}
  =
  r(RS^{\ell+1}-RS^\ell)_K,
  \qquad
  K=1,\ldots,P-1,
\]
and take the \(P\) parent-mean rows \(R_K\), \(K=0,\ldots,P-1\).  Consider
the columns consisting of:
\[
  (K,j_\ell),\qquad K=1,\ldots,P-1,\quad \ell=0,\ldots,q-1,
\]
together with the \(P\) columns
\[
  (K,0),\qquad K=0,\ldots,P-1.
\]
Order the difference rows first, layer by layer, and the parent-mean rows
last.  Order the columns in the same way, with the child-\(0\) columns last.
The resulting square submatrix has block form
\[
  \begin{bmatrix}
  D & 0\\
  * & r^{-1}I_P
  \end{bmatrix}.
\]
Here \(D\) is block diagonal across the exposed layers.  In each layer, the
\((P-1)\times(P-1)\) difference block is lower triangular with diagonal
entries \(-1\).  Hence
\[
  \det D=\pm1,
  \qquad
  \det
  \begin{bmatrix}
  D & 0\\
  * & r^{-1}I_P
  \end{bmatrix}
  =
  \pm r^{-P}\ne0.
\]
Therefore the parent means and the \(q\) exposed layer-difference blocks are
linearly independent.  They give
\[
  P+(P-1)q
\]
independent rows.

No later shift adds rank.  Indeed, \(S^r\) shifts by one parent cell, so
\(RS^{t+r}\) is a parent-level cyclic permutation of \(RS^t\).  This proves
\[
  \rank{\cal O}_L=P+(P-1)q.
\]

The centralized encoder lower bound follows from \Cref{lem:encoder} with
\(H={\cal O}_L\), \(C=R\), and \(p=P\).

We next prove the product-local lower bound.  If \(q=0\), there is nothing
to prove, so assume \(q\ge1\).  Fix a parent \(K\).  Choose a point
\(\bar x\in\Omega\).  Since \(\Omega_K\) is open, there is a sufficiently
small open ball \(B\subset\mathbb R^q\) such that the following
\(q\)-parameter family remains in \(\Omega_K\):
\[
  x_{K,0}(\xi)
  =
  \bar x_{K,0}
  -
  \sum_{\ell=0}^{q-1}\xi_\ell,
  \qquad
  x_{K,r-1-\ell}(\xi)
  =
  \bar x_{K,r-1-\ell}
  +
  \xi_\ell,
  \quad
  \ell=0,\ldots,q-1,
\]
with all other children of parent \(K\) fixed at their values in \(\bar x\).
All other parents are also fixed at their values in \(\bar x\).  The
compensation in child \(0\) keeps the parent average of parent \(K\) fixed,
so \(Rx(\xi)\) is independent of \(\xi\).  The messages from all parents
other than \(K\) are also independent of \(\xi\).

Suppose two parameters \(\xi,\xi'\in B\) give the same local message:
\[
  M_K(x_K(\xi))=M_K(x_K(\xi')).
\]
Then the full data \((Rx,M(x))\) are the same for \(x(\xi)\) and
\(x(\xi')\).  Since the decoder reconstructs \({\cal O}_Lx\), we must have
\[
  {\cal O}_L x(\xi)={\cal O}_L x(\xi').
\]
Let
\[
  d=x(\xi)-x(\xi'),
  \qquad
  \delta_\ell=\xi_\ell-\xi'_\ell.
\]
The triangular relation between
\((R,RS,\ldots,RS^L)\) and
\((R,RA_\lambda,\ldots,RA_\lambda^L)\) is invertible.  Hence
\({\cal O}_Ld=0\) implies
\[
  RS^t d=0,
  \qquad
  0\le t\le L.
\]
For each \(\ell=0,\ldots,q-1\), we have \(\ell+1\le L\).  Using
\[
  r(RS^{\ell+1}d-RS^\ell d)_K
  =
  d_{K-1,r-1-\ell}-d_{K,r-1-\ell},
\]
and noting that only parent \(K\) varies, we obtain
\[
  0
  =
  r(RS^{\ell+1}d-RS^\ell d)_K
  =
  -\delta_\ell.
\]
Thus \(\delta_\ell=0\) for every \(\ell\), so \(\xi=\xi'\).  Therefore the
map
\[
  \xi\mapsto M_K(x_K(\xi))
\]
is a continuous injection from an open subset of \(\mathbb R^q\) into
\(\mathbb R^{m_K}\).  By invariance of domain, \(m_K\ge q\).  Since \(K\) was
arbitrary, this holds for every parent.

We also record the local raw outflow map, since it gives the attaining local
queue and will be used later for conditioning.  Let
\[
  c_{K,j}=x_{K,r-1-j},
  \qquad
  j=0,\ldots,q-1,
\]
be the right-collar values of parent \(K\).  Before wraparound, the outgoing
flux queue satisfies
\[
  \Phi_{K,t}
  =
  \lambda
  \sum_{j=0}^{t}
  \binom{t}{j}
  (1-\lambda)^{t-j}\lambda^j c_{K,j},
  \qquad
  t=0,\ldots,q-1.
\]
Thus \((\Phi_{K,0},\ldots,\Phi_{K,q-1})\) is obtained from the collar
\((c_{K,0},\ldots,c_{K,q-1})\) by a lower-triangular matrix with diagonal
entries
\[
  \lambda,\lambda^2,\ldots,\lambda^q.
\]
For \(0<\lambda\le1\), this map is invertible.  Hence the raw \(q\)-entry
local outflow queue contains exactly the exposed \(q\) collar coordinates.

It remains to show attainment.  The parent update can be written as
\[
  RA_\lambda^{t+1}x
  =
  RA_\lambda^t x
  -
  \frac1r B\Phi_t .
\]
The vector \(B\Phi_t\) has zero sum, so deleting one coordinate loses no
information.  Given \(Rx\) and the anchored sequence
\((JB\Phi_0,\ldots,JB\Phi_{q-1})\), one reconstructs the full divergence
sequence and hence the parent history through the exposed horizon.

If \(L\le r-1\), this is the entire required history.  If \(L>r-1\), then
\(q=r-1\), and the same saturated queue reconstructs
\[
  y_0,y_1,\ldots,y_{r-1},
  \qquad
  y_n=RA_\lambda^n x.
\]
The later parent states are then determined by a fixed linear recurrence.
Indeed,
\[
  A_\lambda-(1-\lambda)I=\lambda S,
\]
so
\[
  \bigl(A_\lambda-(1-\lambda)I\bigr)^r
  =
  \lambda^r S^r.
\]
Let \(\Pi_P\) denote the parent-level cyclic shift,
\[
  (\Pi_P y)_K=y_{K-1}.
\]
Multiplying the previous identity by \(RA_\lambda^n\) gives
\[
  y_{n+r}
  =
  \lambda^r \Pi_P y_n
  -
  \sum_{j=0}^{r-1}
  \binom{r}{j}
  \bigl(-(1-\lambda)\bigr)^{r-j}
  y_{n+j}.
\]
Therefore \(y_r,y_{r+1},\ldots,y_L\) are obtained recursively from
\(y_0,\ldots,y_{r-1}\).  Hence the saturated anchored queue determines
\({\cal O}_Lx\) for every \(L\ge r-1\).
\end{proof}
In the notation above, \Cref{thm:rank} gives
\[
  d^{\rm cen}_L(\Omega)
  =
  (P-1)\min\{L,r-1\}
\]
for every open support \(\Omega\) on which the stated encoder problem is
posed.  For product-local encoders, every admissible vector
\((m_0,\ldots,m_{P-1})\) satisfies
\[
  m_K\ge \min\{L,r-1\},
  \qquad K=0,\ldots,P-1,
\]
and the raw local outflow queues attain this coordinatewise bound.

\begin{corollary}[All-horizon invisible subspace]
\label{cor:invisible-subspace}
For \(L\ge r-1\), the nullspace of the saturated observation operator is
\[
  {\cal N}_{\infty}
  =
  \left\{
  x\in\mathbb R^{Pr}:
  x_{K,j}=c_j\ \text{for every }K,\quad
  \sum_{j=0}^{r-1}c_j=0
  \right\}.
\]
Consequently,
\[
  \dim {\cal N}_{\infty}=r-1.
\]
These are precisely the fine-grid directions that remain invisible to all
future parent-average observations.
\end{corollary}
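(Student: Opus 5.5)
The plan is to reduce the saturated nullspace to the common kernel of the shifted restrictions, exhibit the claimed subspace inside it, and then match dimensions using the rank formula of \Cref{thm:rank}.

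\textbf{Step 1: reduce to shifted restrictions.} As in the proof of \Cref{thm:rank}, the row spans of \((R,RA_\lambda,\ldots,RA_\lambda^L)\) and \((R,RS,\ldots,RS^L)\) coincide, by the triangular change of variables with diagonal \(1,\lambda,\ldots,\lambda^L\). Hence
\[
  \ker{\cal O}_L=\bigcap_{t=0}^{L}\ker RS^t .
\]
For \(L\ge r-1\), rows with \(t\ge r\) add nothing, since \(RS^{t+r}\) is a parent-level permutation of \(RS^t\). So the kernel is independent of \(L\) and equals \(\bigcap_{t=0}^{r-1}\ker RS^t\). This common space is what \({\cal N}_\infty\) denotes.

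\textbf{Step 2: the claimed subspace lies in the kernel.} Take \(x\) with \(x_{K,j}=c_j\) for all \(K\) and \(\sum_j c_j=0\). Then \(Sx\) has the same form, with the entries \(c_j\) cyclically permuted: \((Sx)_{K,j}=c_{j-1}\) for \(j\ge1\) and \((Sx)_{K,0}=c_{r-1}\). Each parent therefore still averages \(\frac1r\sum_j c_j=0\), and by induction \(RS^tx=0\) for all \(t\). This gives inclusion of the claimed set in the kernel, and that set has dimension \(r-1\).

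\textbf{Step 3: dimension count.} By \Cref{thm:rank} with \(q=r-1\), the rank is \(P+(P-1)(r-1)\). Rank--nullity then gives
\[
  \dim\ker{\cal O}_L=Pr-P-(P-1)(r-1)=r-1.
\]
The inclusion from Step 2 between spaces of equal finite dimension is therefore an equality. The invisibility interpretation is immediate: \(x\) is invisible to all future parent averages exactly when \({\cal O}_Lx=0\) for every \(L\), and by Step 1 this is the saturated kernel.

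\textbf{Main obstacle.} The only real point is that the dimension match relies on the exact rank in \Cref{thm:rank}. If a self-contained check is wanted, there is a direct route. The differences \(D_t\), \(t=0,\ldots,r-2\), force \(x_{K-1,r-1-t}=x_{K,r-1-t}\), so every layer \(j\ge1\) is constant across parents. Zero parent means then force layer \(0\) to be constant across parents as well, and its value is \(-\sum_{j\ge1}c_j\). This yields the reverse inclusion without appealing to dimension counting.
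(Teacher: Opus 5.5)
Your proposal is correct and follows the same argument as the paper: you show the repeated zero-sum profiles are annihilated by every future restriction, and you close with rank--nullity using \(\rank{\cal O}_L=Pr-r+1\) from \Cref{thm:rank}. The paper gets the inclusion by noting that \(A_\lambda\) is a polynomial in \(S\), where you reduce to \(RS^t\) first; your fallback route through the difference rows \(D_t\) and zero parent means is also valid, and it gives the reverse inclusion without the dimension count.
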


\begin{proof}
If \(x_{K,j}=c_j\) for every parent \(K\), then every parent contains the
same child profile.  Since \(\sum_{j=0}^{r-1}c_j=0\), all parent averages are
zero.  Since \(A_\lambda\) is a polynomial in the periodic shift \(S\), each
future state is a linear combination of shifted copies of the same repeated
zero-mean profile.  Hence all future parent averages remain zero.

Conversely, for \(L\ge r-1\), \Cref{thm:rank} gives
\[
  \rank {\cal O}_L
  =
  P+(P-1)(r-1)
  =
  Pr-r+1.
\]
Thus the nullity is
\[
  Pr-(Pr-r+1)=r-1.
\]
The displayed invisible subspace has dimension \(r-1\), because it is the
space of \(r\)-component profiles with zero sum.  Therefore it is the entire
saturated nullspace.
\end{proof}

\begin{corollary}[Sharp finite delay]
\label{cor:sharp-delay}
For every \(m\in\{0,\ldots,r-2\}\), there exist two fine-grid states whose
parent averages agree from time \(0\) through time \(m\), but differ at time
\(m+1\).  Conversely, if two fine-grid states have identical parent-average
histories through time \(r-1\), then their parent-average histories agree for
all subsequent times.
\end{corollary}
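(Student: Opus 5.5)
The plan reduces both halves of \Cref{cor:sharp-delay} to statements about the single difference vector \(d=x-x'\), using linearity: two states have identical parent-average histories through time \(L\) exactly when \({\cal O}_Ld=0\).

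\emph{Existence.} Fix \(m\in\{0,\ldots,r-2\}\). I need a \(d\) with \({\cal O}_md=0\) but \(RA_\lambda^{m+1}d\neq0\). By \Cref{thm:rank}, with \(q=\min\{m,r-1\}=m\) and \(q=m+1\le r-1\) respectively,
\[
  \rank{\cal O}_{m+1}=P+(P-1)(m+1)>P+(P-1)m=\rank{\cal O}_m,
\]
because \(P\ge2\). Since \({\cal O}_{m+1}\) is \({\cal O}_m\) with the extra block \(RA_\lambda^{m+1}\) appended, the strict rank increase gives \(\ker{\cal O}_{m+1}\subsetneq\ker{\cal O}_m\). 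Any \(d\) in the difference of the two kernels works, and I take \(x=d\), \(x'=0\).

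A concrete witness is also available, and I would record it for transparency. Let \(d\) be supported in a single parent \(K\), with \(d_{K,r-1-m}=1\) and \(d_{K,0}=-1\). This pair of entries is distinct because \(m\le r-2\). Then \(Rd=0\). By the difference formula \((D_td)_K=d_{K-1,r-1-t}-d_{K,r-1-t}\) from the proof of \Cref{thm:rank}, the differences \(D_t d\) vanish for \(t<m\), since the layer \(r-1-t\) is neither \(r-1-m\) nor \(0\). So \(RS^td=0\) for \(t\le m\). At \(t=m\), however, \(D_m d\neq0\), so \(RS^{m+1}d\neq0\). The triangular relation between \((R,\ldots,RS^L)\) and \((R,\ldots,RA_\lambda^L)\) then transfers this to \(RA_\lambda^t\). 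The last coefficient of that relation is \(\lambda^{m+1}\neq0\), and the earlier terms vanish, so \(RA_\lambda^{m+1}d\neq0\).

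\emph{Converse.} Suppose \({\cal O}_{r-1}d=0\). For any \(L\ge r-1\), \Cref{thm:rank} gives \(\rank{\cal O}_L=\rank{\cal O}_{r-1}\). The row space of \({\cal O}_{r-1}\) is contained in that of \({\cal O}_L\), and the ranks are equal, so the row spaces coincide and therefore \(\ker{\cal O}_L=\ker{\cal O}_{r-1}\). Hence \({\cal O}_Ld=0\) for all \(L\), which says the histories agree at all later times. Alternatively, one can cite the linear recurrence \(y_{n+r}=\lambda^r\Pi_Py_n-\sum_j\binom{r}{j}(-(1-\lambda))^{r-j}y_{n+j}\) established at the end of that proof and argue by induction.

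The only delicate point is checking that the row-span transfer between \(RS^t\) and \(RA_\lambda^t\) preserves the vanishing pattern index-by-index. This holds because the relation is triangular with nonzero diagonal, so I expect no real obstacle.
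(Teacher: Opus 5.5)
Your proof is correct, and its secondary line is the paper's own argument. The paper uses exactly your witness, $v=\delta(e_{K,0}-e_{K,\ell})$ with $\ell=r-1-m$; yours is the case $\delta=-1$. It notes that every shift $S^sv$ with $s\le m$ keeps both nonzero entries inside parent $K$, and reads off $RA_\lambda^{m+1}v=\frac{\lambda^{m+1}\delta}{r}(e_K-e_{K+1})$. For the converse, the paper simply invokes the order-$r$ recurrence from the proof of \Cref{thm:rank}, which is your alternative.

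Your primary arguments repackage this as dimension counting, and both are valid. The rank-jump route to existence is shorter to state but nonconstructive. It also relies on the full lower-bound half of \Cref{thm:rank}, i.e. the independent-row construction. The explicit witness needs only the shift structure and the binomial expansion of $A_\lambda^t$. It also shows the size $\lambda^{m+1}$ of the first coarse separation, which is the same factor that sits on the diagonal of $T_q$ in \Cref{thm:conditioning}.

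Your kernel-equality route to the converse needs only the easy saturation half of \Cref{thm:rank}, namely $RS^{t+r}=\Pi_P RS^t$, and is arguably cleaner than the recurrence. The recurrence, however, gives the explicit linear map from $y_0,\ldots,y_{r-1}$ to all later parent states, which the paper reuses in \Cref{cor:autonomous-state}.
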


\begin{proof}
Fix a parent \(K\), choose \(\delta\ne0\), and set
\[
  \ell=r-1-m.
\]
Consider the difference vector
\[
  v=\delta(e_{K,0}-e_{K,\ell}).
\]
The two nonzero entries of \(v\) lie in the same parent and have zero parent
average.  For \(0\le t\le m\), every shift \(S^s v\) appearing in
\(A_\lambda^t v\), with \(0\le s\le t\), still has both nonzero entries
inside the same parent.  Hence
\[
  RA_\lambda^t v=0,
  \qquad
  0\le t\le m.
\]
At time \(m+1\), the component initially at child \(\ell\) reaches the next
parent.  The term with \(S^{m+1}\) has coefficient \(\lambda^{m+1}\), so
\[
  RA_\lambda^{m+1}v
  =
  \frac{\lambda^{m+1}\delta}{r}
  (e_K-e_{K+1})
  \ne0,
\]
with parent indices interpreted periodically.  This proves the sharp delayed
separation.

The converse follows from the recurrence in the proof of \Cref{thm:rank}.
Once the parent-average history through time \(r-1\) is fixed, the saturated
recurrence determines every later parent average.
\end{proof}

\begin{corollary}[Saturated memory and periodic gauge]
\label{cor:gauge}
At saturation, \(q=r-1\).  A product-local upstream encoder needs \(r-1\)
coordinates per parent: each parent must retain its unresolved downstream,
or outflow, collar.  A centralized encoder needs only
\[
  (P-1)(r-1)
\]
coordinates beyond the parent means.  The saving of \(r-1\) coordinates is
exactly the spatially uniform periodic flux gauge, one redundant constant
face-flux mode at each queue time.
\end{corollary}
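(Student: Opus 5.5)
The corollary is essentially a specialization of \Cref{thm:rank} at \(q=r-1\), plus an identification of the \(r-1\) saved coordinates with a gauge. I would argue in three steps.

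\textbf{Step 1: saturation and the two memory counts.} Note that saturation means \(L\ge r-1\), so \(q=\min\{L,r-1\}=r-1\). The product-local bound \(m_K\ge r-1\) then follows directly from the second part of \Cref{thm:rank}. It is attained by the raw local outflow queue \((\Phi_{K,0},\ldots,\Phi_{K,r-2})\). That queue is a triangular, invertible image of the collar \((x_{K,r-1},\ldots,x_{K,1})\), i.e.\ everything in parent \(K\) except child \(0\), whose value is recovered from the parent mean. This justifies the phrase ``retain its outflow collar''. The centralized count \((P-1)(r-1)\) is the first part of \Cref{thm:rank} with \(q=r-1\), attained by the anchored queue.

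\textbf{Step 2: the product-local architecture reconstructs \(\mathcal O_L x\).} Summing the local bounds gives a total of \(P(r-1)\) coordinates. I would check that \(P(r-1)\) local coordinates suffice: from the raw queues one forms \(B\Phi_t\) and applies the update \(RA_\lambda^{t+1}x=RA_\lambda^t x-\tfrac1r B\Phi_t\), then the saturated recurrence. Hence the excess over the centralized bound is exactly \(P(r-1)-(P-1)(r-1)=r-1\).

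\textbf{Step 3: identify the saving as the gauge.} For each queue time \(t\in\{0,\ldots,r-2\}\), the divergence map \(B:\R^P\to\R^P\), \((B\Phi)_K=\Phi_K-\Phi_{K-1}\), is the periodic difference operator. Its kernel is exactly the constants \(\mathbf 1\), since \(P\ge2\) and \(B\) is a cyclic difference on a connected cycle, and its range is the zero-sum hyperplane. Thus the parent history depends on \(\Phi_t\) only modulo adding \(c_t\mathbf 1\). This is one uniform face-flux mode per queue time, so \(r-1\) modes in total, and it is precisely the information discarded in passing from \(\Phi_t\) to \(JB\Phi_t\).

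To make the ``exactly'' sharp, I would also show that a uniform flux shift can be realized without altering parent histories. A uniform shift in the queue corresponds to adding a uniform collar profile across parents. By \Cref{cor:invisible-subspace}, identical zero-sum child profiles in every parent are invisible to all parent averages. That subspace has dimension \(r-1\), matching the saving.

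\textbf{Main obstacle.} The delicate point is Step 3's converse: the gauge modes must be genuinely unobservable, not merely redundant in the chosen encoding. I would handle this by mapping \(\mathcal N_\infty\) through the triangular collar-to-queue map and showing its image is the span of \(\{(c_t\mathbf 1)_t\}\). This uses the fact that a profile repeated across parents gives fluxes \(\Phi_{K,t}\) independent of \(K\), because before wraparound each parent's flux depends only on its own collar.
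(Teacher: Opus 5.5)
Your proposal is correct and follows essentially the same route as the paper: you specialize \Cref{thm:rank} to \(q=r-1\), use the triangular collar-to-queue map so that the raw local queue together with the parent mean recovers each parent, and identify the saved coordinates with \(\ker B=\operatorname{span}\{\mathbf 1\}\), one constant face-flux mode per queue time. Your extra step goes slightly beyond the paper, which asserts the gauge count only from the fact that flux differences alone enter the update. Mapping \({\cal N}_\infty\) from \Cref{cor:invisible-subspace} bijectively onto the uniform-flux modes \(\{(\phi_t\mathbf 1)_{t}\}\) shows that these modes are realized by fine states invisible to every parent average, which makes the word ``exactly'' precise.
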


\begin{proof}
Set \(q=r-1\) in \Cref{thm:rank}.  The product-local lower bound gives
\(m_K\ge r-1\) for every parent \(K\).  The local raw outflow queue has
exactly \(r-1\) entries and, by the triangular collar-to-queue map in the
proof of \Cref{thm:rank}, it recovers the \(r-1\) right-collar values of the
parent.  The remaining child value is then determined by the parent average.

For the centralized encoder, the anchored divergence queue has
\((P-1)(r-1)\) coordinates.  At each queue time, adding the same constant
face flux to every periodic parent interface produces no change in the
parent-average update, since only cyclic flux differences enter the update.
This removes exactly one scalar gauge coordinate per queue time, hence
exactly \(r-1\) coordinates at saturation.
\end{proof}

\begin{corollary}[Autonomous saturated predictive state]
\label{cor:autonomous-state}
Let
\[
  y_n=RA_\lambda^n x
\]
and define the saturated state
\[
  Z_n
  =
  \left(
  y_n,
  JB\Phi_n,
  JB\Phi_{n+1},
  \ldots,
  JB\Phi_{n+r-2}
  \right).
\]
Then there exists a fixed linear map \({\cal F}\), depending only on
\((P,r,\lambda)\), such that
\[
  Z_{n+1}={\cal F}Z_n
\]
for every \(n\ge0\).  Thus the parent means together with the saturated
flux-divergence queue form an autonomous predictive state and advance without
revisiting the fine-grid state.
\end{corollary}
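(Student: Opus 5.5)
The plan is to exhibit \({\cal F}\) block by block. Everything reduces to linear identities already established in the proof of \Cref{thm:rank}: the parent update \(y_{n+1}=y_n-\frac1r B\Phi_n\), and the fact that the saturated queue determines the next divergence through the recurrence for the parent means.

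\textbf{Step 1 (mean block and queue shift).} Since \(B\Phi_n\) has zero sum, it is recovered linearly from \(JB\Phi_n\) by a fixed reinsertion map \(J^+\). This map restores the deleted coordinate as minus the sum of the others. Hence
\[
  y_{n+1}=y_n-\tfrac1r J^+JB\Phi_n ,
\]
which is linear in \(Z_n\). The entries \(JB\Phi_{n+1},\ldots,JB\Phi_{n+r-2}\) of \(Z_{n+1}\) already sit in \(Z_n\), so they are moved by a shift. Only the new last entry \(JB\Phi_{n+r-1}\) needs work.

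\textbf{Step 2 (new queue entry).} From \(Z_n\) we reconstruct \(y_n,y_{n+1},\ldots,y_{n+r-1}\) linearly by telescoping the update identity, exactly as in the attainment part of \Cref{thm:rank}. The recurrence derived there, multiplied by \(RA_\lambda^n\), holds for every \(n\ge0\). It gives
\[
  y_{n+r}
  =
  \lambda^r\Pi_P y_n
  -
  \sum_{j=0}^{r-1}\binom{r}{j}\bigl(-(1-\lambda)\bigr)^{r-j}y_{n+j},
\]
so \(y_{n+r}\) is a fixed linear function of \(Z_n\). Then
\[
  B\Phi_{n+r-1}=r\,(y_{n+r-1}-y_{n+r}),
\]
and applying \(J\) yields the last block. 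All coefficients depend only on \(P,r,\lambda\) and on the fixed choices of \(J\) and \(J^+\), so \({\cal F}\) is a fixed linear map.

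\textbf{Main obstacle.} The point needing care is that the identities must hold for all \(n\), not only \(n=0\). Two things justify this. First, \(\Phi_{K,t}=\lambda(A_\lambda^t x)_{K,r-1}\) and \(y_t=RA_\lambda^t x\) are defined for the same realization at every \(t\). Second, the parent update \(RA_\lambda^{t+1}x=RA_\lambda^t x-\frac1r B\Phi_t\) holds at every \(t\), by \Cref{prop:flux-closure} applied to the state \(A_\lambda^t x\). The recurrence is an operator identity, \((A_\lambda-(1-\lambda)I)^r=\lambda^rS^r\) with \(RS^r=\Pi_P R\), so it is also valid for every \(n\).

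One small check remains: that \(JB\Phi\) indeed determines \(B\Phi\) via \(J^+\). This holds because the columns of \(B\) sum to zero, so \(B\Phi\) lies in the zero-sum subspace. With these in place, the autonomy \(Z_{n+1}={\cal F}Z_n\) follows, and no fine-grid state is revisited.
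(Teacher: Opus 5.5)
Your proposal is correct and follows essentially the same route as the paper. You recover each \(B\Phi_t\) from \(JB\Phi_t\) by the zero-sum property, telescope \(y_{t+1}=y_t-\frac1r B\Phi_t\) to obtain \(y_{n+1},\ldots,y_{n+r-1}\), apply the saturated recurrence to get \(y_{n+r}\), and form the new tail \(B\Phi_{n+r-1}=r(y_{n+r-1}-y_{n+r})\); your added check that the update and recurrence hold for every \(n\), the latter via \((A_\lambda-(1-\lambda)I)^r=\lambda^rS^r\) and \(RS^r=\Pi_P R\), simply makes explicit what the paper leaves implicit.
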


\begin{proof}
Given \(Z_n\), the anchored divergence entries reconstruct
\[
  B\Phi_n,\ldots,B\Phi_{n+r-2},
\]
because each \(B\Phi_t\) has zero sum.  Starting from \(y_n\), the update
\[
  y_{t+1}
  =
  y_t-\frac1r B\Phi_t
\]
then reconstructs
\[
  y_{n+1},\ldots,y_{n+r-1}.
\]
The recurrence from the proof of \Cref{thm:rank},
\[
  y_{n+r}
  =
  \lambda^r \Pi_P y_n
  -
  \sum_{j=0}^{r-1}
  \binom{r}{j}
  \bigl(-(1-\lambda)\bigr)^{r-j}
  y_{n+j},
\]
gives \(y_{n+r}\).  Finally,
\[
  B\Phi_{n+r-1}
  =
  r(y_{n+r-1}-y_{n+r})
\]
provides the new tail of the queue.  Every operation is linear and depends
only on \((P,r,\lambda)\), so the update has the form
\(Z_{n+1}={\cal F}Z_n\).
\end{proof}

\begin{corollary}[Minimal autonomous predictive-state dimension]
\label{cor:minimal-autonomous-dimension}
Let \(\Omega\subset\mathbb R^{Pr}\) contain a nonempty open set.  Let
\(E:\Omega\to\mathbb R^d\) be a continuous encoder.  Suppose there exist maps
\[
  F:\mathbb R^d\to\mathbb R^d,
  \qquad
  H_{\rm out}:\mathbb R^d\to\mathbb R^P,
\]
such that
\[
  RA_\lambda^n x
  =
  H_{\rm out}\bigl(F^n(E(x))\bigr)
  \qquad
  \text{for every }x\in\Omega\text{ and every }n\ge0.
\]
Then
\[
  d\ge \rank{\cal O}_{r-1}
  =
  Pr-r+1.
\]
The saturated state in \Cref{cor:autonomous-state} attains this dimension.
\end{corollary}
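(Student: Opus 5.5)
The plan is to reduce the claim to the continuous encoder lower bound, \Cref{lem:encoder}, by taking the first \(r\) observed blocks and viewing them as a function of the encoded state alone. Define the map \(\Psi:\mathbb R^d\to\mathbb R^{rP}\) by
\[
  \Psi(z)=\bigl(H_{\rm out}(z),H_{\rm out}(F(z)),\ldots,H_{\rm out}(F^{r-1}(z))\bigr).
\]
By hypothesis, \({\cal O}_{r-1}x=\Psi(E(x))\) for every \(x\in\Omega\). So \(\Psi\) plays the role of the decoder and \(E\) the role of the continuous memory map. No continuity is needed for \(F\) or \(H_{\rm out}\), which matches the lemma's convention that the decoder is only defined on attained data.

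Next, apply \Cref{lem:encoder} with \(H={\cal O}_{r-1}\) and \(C=0\), so \(\rank C=0\), \(M=E\) and \(G(Cx,M(x))=\Psi(E(x))\). This gives \(d\ge\rank{\cal O}_{r-1}\). By \Cref{thm:rank} with \(L=r-1\) and \(q=r-1\), that rank equals \(P+(P-1)(r-1)=Pr-r+1\). Spelled out, the lemma's argument restricts to a small ball in an affine copy of a complement of \(\ker{\cal O}_{r-1}\) inside \(\Omega\). Injectivity of \({\cal O}_{r-1}\) there forces \(E\) to be injective on that ball, and invariance of domain then bounds the dimension. The only point to check is that \(\Omega\) contains an open set, so such a ball exists; this is assumed. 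Using horizon \(r-1\) rather than all \(n\) suffices, and by \Cref{cor:sharp-delay} nothing more is gained at later horizons anyway.

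For attainment, I will count the dimension of \(Z_n\) in \Cref{cor:autonomous-state}. It is \(P+(r-1)(P-1)=Pr-r+1\). Take \(E(x)=Z_0\), which is linear and hence continuous. Take \(F={\cal F}\) and let \(H_{\rm out}\) project onto the \(y\)-block. Then \(H_{\rm out}({\cal F}^nZ_0)=y_n=RA_\lambda^nx\) for all \(n\ge0\), so the bound is met with equality.

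I expect no genuine obstacle. The only delicate point is making sure \Cref{lem:encoder} tolerates a discontinuous decoder and a trivial \(C\). Its proof uses only continuity of the encoder \((C,M)\) and the functional relation, so both are fine. If one prefers not to use \(C=0\), one can instead take \(C=R\) and \(M=E\); this gives the weaker bound \(P+d\ge\rank{\cal O}_{r-1}\). Hence the choice \(C=0\) is essential.
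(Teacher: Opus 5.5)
Your proposal is correct and follows the paper's proof essentially verbatim. The paper likewise observes that the autonomous representation makes ${\cal O}_{r-1}x$ factor through the continuous map $E$, applies \Cref{lem:encoder} with $H={\cal O}_{r-1}$ and $C=0$, and evaluates the rank via \Cref{thm:rank}; your explicit attainment construction ($E(x)=Z_0$, $F={\cal F}$, $H_{\rm out}$ the projection onto the $y$-block) simply spells out what the paper asserts by counting the $Pr-r+1$ coordinates of $Z_n$.
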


\begin{proof}
The assumed autonomous representation determines
\[
  RA_\lambda^n x,
  \qquad
  n=0,\ldots,r-1,
\]
from \(E(x)\).  Hence \({\cal O}_{r-1}x\) factors through the continuous map
\(E:\Omega\to\mathbb R^d\).  Applying \Cref{lem:encoder} with
\(H={\cal O}_{r-1}\) and \(C=0\) gives
\[
  d\ge \rank{\cal O}_{r-1}.
\]
By \Cref{thm:rank},
\[
  \rank{\cal O}_{r-1}
  =
  P+(P-1)(r-1)
  =
  Pr-r+1.
\]
The state in \Cref{cor:autonomous-state} has exactly
\(P+(P-1)(r-1)=Pr-r+1\) coordinates, so the bound is sharp.
\end{proof}

\section{Stable observability and effective rank}
\label{sec:stable-observability}

The exact rank theorem is algebraic.  It says which directions are visible
in exact arithmetic.  It does not say whether those directions are stably
recoverable from floating-point data.  This distinction is standard in
numerical linear algebra \cite{TrefethenBau:1997nla,Higham:2002accuracy} and is closely related to observability questions for discretized partial differential equations (PDEs) \cite{CohnDee:1988observability}.  Classical realization and model-reduction theory also connect finite input-output
histories, observability, singular values, and minimal state representations
\cite{HoKalman:1966realization,Moore:1981pca}.

Let
\[
  c_j=x_{K,r-1-j}
\]
denote the right collar values of one parent.  Before wraparound, the
outgoing flux queue satisfies
\[
  \Phi_t
  =
  \lambda
  \sum_{j=0}^{t}
  \binom{t}{j}
  (1-\lambda)^{t-j}\lambda^j c_j,
  \qquad
  t=0,\ldots,q-1.
\]
This defines a lower-triangular matrix \(T_q\) mapping the \(q\) collar
values to the first \(q\) outflow values.

\begin{theorem}[Conditioning of delayed queue recovery]
\label{thm:conditioning}
For \(q\ge1\), let \(T_q\) be the collar-to-queue matrix defined by
\[
  (T_q)_{tj}
  =
  \binom{t}{j}(1-\lambda)^{t-j}\lambda^{j+1},
  \qquad
  0\le j\le t\le q-1,
\]
and \((T_q)_{tj}=0\) for \(j>t\).  Then \(T_q\) is lower triangular with
diagonal entries
\[
  \lambda,\lambda^2,\ldots,\lambda^q,
\]
and
\[
  \det T_q=\lambda^{q(q+1)/2}.
\]
Its inverse is also lower triangular and is given by
\[
  (T_q^{-1})_{tj}
  =
  \lambda^{-(t+1)}
  \binom{t}{j}
  \bigl[-(1-\lambda)\bigr]^{t-j},
  \qquad
  0\le j\le t\le q-1.
\]
Moreover,
\[
  \|T_q\|_\infty=\lambda,
  \qquad
  \|T_q^{-1}\|_\infty
  =
  \lambda^{-q}(2-\lambda)^{q-1},
\]
so
\[
  \kappa_\infty(T_q)
  =
  \left(\frac{2-\lambda}{\lambda}\right)^{q-1}.
\]
Finally,
\[
  \frac{\lambda^q}{\sqrt q\,(2-\lambda)^{q-1}}
  \le
  \sigma_{\min}(T_q)
  \le
  \lambda^q.
\]
Thus, for fixed \(q\), the smallest singular value scales like
\(\lambda^q\) as \(\lambda\to0\).
\end{theorem}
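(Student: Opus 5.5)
Write \(T_q=\lambda\,E\,\mathcal P\,\Lambda\), where \(\Lambda=\operatorname{diag}(1,\lambda,\ldots,\lambda^{q-1})\) acts on the collar index \(j\), \(E=\operatorname{diag}(1,1,\ldots,1)\) is the identity, and \(\mathcal P\) is the lower-triangular generalized Pascal matrix \(\mathcal P_{tj}=\binom{t}{j}a^{t-j}\) with \(a=1-\lambda\). Directly, \(\lambda\binom{t}{j}a^{t-j}\lambda^{j}=(T_q)_{tj}\), so the factorization is immediate. Triangularity, the diagonal \(\lambda^{t+1}\), and \(\det T_q=\prod_{t=0}^{q-1}\lambda^{t+1}=\lambda^{q(q+1)/2}\) follow at once.

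For the inverse, I will use the classical identity \(\mathcal P(a)^{-1}=\mathcal P(-a)\). This holds because \(\mathcal P(a)\mathcal P(b)=\mathcal P(a+b)\), which is the binomial identity \(\sum_{k}\binom{t}{k}\binom{k}{j}a^{t-k}b^{k-j}=\binom{t}{j}(a+b)^{t-j}\). Hence \(T_q^{-1}=\lambda^{-1}\Lambda^{-1}\mathcal P(-a)\). However, this gives entries \(\lambda^{-(t+1)}\binom{t}{j}(-a)^{t-j}\) only if the scaling sits on the row side. So I must be careful about which side \(\Lambda\) multiplies.

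The clean way is to note \(\binom{t}{j}a^{t-j}\lambda^{j}=\lambda^{t}\binom{t}{j}(a/\lambda)^{t-j}\). Thus \(T_q=\lambda\,\Lambda\,\mathcal P(a/\lambda)\), with \(\Lambda\) now on the row side. Inverting gives \(T_q^{-1}=\lambda^{-1}\mathcal P(-a/\lambda)\Lambda^{-1}\), whose \((t,j)\) entry is \(\lambda^{-1}\binom{t}{j}(-a/\lambda)^{t-j}\lambda^{-j}=\lambda^{-(t+1)}\binom{t}{j}(-a)^{t-j}\), as claimed.

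\textbf{Norms.} The row sums of \(T_q\) are nonnegative. Row \(t\) sums to \(\lambda\sum_j\binom{t}{j}a^{t-j}\lambda^{j}=\lambda(a+\lambda)^t=\lambda\), so \(\|T_q\|_\infty=\lambda\). The absolute row sum of \(T_q^{-1}\) in row \(t\) is \(\lambda^{-(t+1)}(1+a)^t=\lambda^{-1}\bigl((2-\lambda)/\lambda\bigr)^t\). Since \((2-\lambda)/\lambda\ge1\), this is maximized at \(t=q-1\), giving \(\lambda^{-q}(2-\lambda)^{q-1}\). The product of the two norms is \(\kappa_\infty(T_q)\).

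\textbf{Singular values.} For the upper bound, \(\sigma_{\min}(T_q)\le|\lambda_{\min}(T_q)|\) in modulus, because \(|\det|=\prod\sigma_i\) alone does not suffice. The simplest route is to use the last row and last column. Since \(T_q\) is lower triangular, \(T_q^{\top}e_{q-1}\) has only the entry \((T_q)_{q-1,q-1}=\lambda^q\)? No: row \(q-1\) is full. Instead I use \(T_q e_{q-1}=\lambda^q e_{q-1}\), because the last column of a lower-triangular matrix has only its diagonal entry. So \(\sigma_{\min}\le\|T_qe_{q-1}\|_2=\lambda^q\).

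For the lower bound, \(\sigma_{\min}=1/\|T_q^{-1}\|_2\), and \(\|T_q^{-1}\|_2\le\sqrt q\,\|T_q^{-1}\|_\infty\). This uses the standard inequality \(\|B\|_2\le\sqrt{n}\|B\|_\infty\): indeed \(\|Bx\|_2\le\sqrt n\|Bx\|_\infty\le\sqrt n\|B\|_\infty\|x\|_\infty\le\sqrt n\|B\|_\infty\|x\|_2\). Together these give the stated bound, and the \(\lambda\to0\) scaling follows since \((2-\lambda)^{q-1}\to2^{q-1}\).

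The only delicate step is getting the diagonal scaling on the correct side so the inverse formula matches exactly. The rest is binomial bookkeeping.
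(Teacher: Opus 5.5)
Your proof is correct and follows essentially the paper's route: triangularity for the determinant, a binomial identity for the inverse (you package it as the Pascal group law $\mathcal P(a)\mathcal P(b)=\mathcal P(a+b)$, which is the same alternating-binomial cancellation the paper checks by direct multiplication), row sums for the $\infty$-norms, $T_q e_{q-1}=\lambda^q e_{q-1}$ for the upper singular-value bound, and $\|T_q^{-1}\|_2\le\sqrt q\,\|T_q^{-1}\|_\infty$ for the lower one. Your hesitation after the first factorization was unnecessary---in $\lambda^{-1}\Lambda^{-1}\mathcal P(-a)$ the factor $\Lambda^{-1}$ already acts on rows and gives exactly $\lambda^{-(t+1)}\binom{t}{j}(-a)^{t-j}$---and the norm step should invoke nonnegativity of the entries (i.e.\ $0<\lambda\le 1$), not nonnegativity of the row sums.
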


\begin{proof}
The triangular form and diagonal entries follow immediately from the
definition of \(T_q\).  The determinant is the product of the diagonal
entries.

The stated inverse follows from the binomial inversion identity.  Direct
multiplication gives, for \(j\le t\),
\[
  \sum_{m=j}^{t}
  \binom{t}{m}(1-\lambda)^{t-m}\lambda^{m+1}
  \lambda^{-(m+1)}
  \binom{m}{j}\bigl[-(1-\lambda)\bigr]^{m-j}
  =
  \binom{t}{j}(1-\lambda)^{t-j}
  \sum_{m=j}^{t}
  \binom{t-j}{m-j}(-1)^{m-j}.
\]
The final sum is \(1\) when \(t=j\) and \(0\) otherwise, proving the inverse
formula.

For the infinity norm,
\[
  \sum_{j=0}^{t}
  |(T_q)_{tj}|
  =
  \lambda
  \sum_{j=0}^{t}
  \binom{t}{j}(1-\lambda)^{t-j}\lambda^j
  =
  \lambda.
\]
Similarly,
\[
  \sum_{j=0}^{t}
  |(T_q^{-1})_{tj}|
  =
  \lambda^{-(t+1)}
  \sum_{j=0}^{t}
  \binom{t}{j}(1-\lambda)^{t-j}
  =
  \lambda^{-(t+1)}(2-\lambda)^t,
\]
whose maximum over \(t=0,\ldots,q-1\) is
\(\lambda^{-q}(2-\lambda)^{q-1}\).  This proves the formula for
\(\kappa_\infty(T_q)\).

The upper bound on \(\sigma_{\min}(T_q)\) follows by applying \(T_q\) to the
last coordinate vector:
\[
  \sigma_{\min}(T_q)\le \|T_q e_{q-1}\|_2=\lambda^q.
\]
For the lower bound, use
\[
  \sigma_{\min}(T_q)=\|T_q^{-1}\|_2^{-1}
\]
and
\[
  \|T_q^{-1}\|_2\le \sqrt q\,\|T_q^{-1}\|_\infty.
\]
This gives the stated two-sided estimate.
\end{proof}

\begin{proposition}[Spatial conditioning of the periodic divergence]
\label{prop:spatial-conditioning}
Let \(B:\mathbb R^P\to\mathbb R^P\) be the periodic difference operator
\[
  (B\phi)_K=\phi_K-\phi_{K-1}.
\]
Then \(B\mathbf 1=0\), and the nonzero singular values of \(B\) are
\[
  \sigma_k(B)
  =
  2\left|\sin\left(\frac{\pi k}{P}\right)\right|,
  \qquad
  k=1,\ldots,P-1.
\]
In particular,
\[
  \sigma_{\min}^+(B)
  =
  2\sin\left(\frac{\pi}{P}\right),
\]
where \(\sigma_{\min}^+\) denotes the smallest nonzero singular value.
Thus centralized recovery through flux differences has both a temporal
conditioning mechanism, represented by \(T_q\), and a spatial conditioning
mechanism, represented by \(B\).
\end{proposition}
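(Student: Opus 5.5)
The plan is to diagonalize \(B\) with the discrete Fourier transform and read off the singular values from the eigenvalues.

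\textbf{Step 1 (null vector).} Write \(B=I-\Pi_P\), where \(\Pi_P\) is the parent-level cyclic shift \((\Pi_P\phi)_K=\phi_{K-1}\) introduced in the proof of \Cref{thm:rank}. Since \(\Pi_P\mathbf 1=\mathbf 1\), we get \(B\mathbf 1=0\) at once.

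\textbf{Step 2 (eigenvalues).} \(\Pi_P\) is a real orthogonal permutation matrix, so \(B\) is a circulant and hence a normal matrix. Its singular values are therefore the moduli of its eigenvalues. Take the Fourier vectors \(f_k=(\omega^{kK})_{K=0}^{P-1}\) with \(\omega=e^{2\pi i/P}\). Then \(\Pi_P f_k=\omega^{-k}f_k\), and so
\[
Bf_k=(1-\omega^{-k})f_k,\qquad k=0,\ldots,P-1.
\]
The vectors \(f_k/\sqrt P\) form an orthonormal basis of \(\mathbb C^P\). This gives the unitary diagonalization \(B=F^*\operatorname{diag}(1-\omega^{-k})F\).

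\textbf{Step 3 (moduli).} Compute
\[
|1-e^{-i\theta}|^2=2-2\cos\theta=4\sin^2(\theta/2),
\qquad \theta=2\pi k/P.
\]
This yields \(\sigma_k(B)=2|\sin(\pi k/P)|\). The value \(k=0\) gives the zero singular value, matching Step 1. For \(1\le k\le P-1\), the argument \(\pi k/P\) lies strictly inside \((0,\pi)\), so these \(P-1\) values are nonzero. Consequently \(\ker B=\operatorname{span}\{\mathbf 1\}\) and \(\rank B=P-1\), which agrees with the use of \(J\) in \Cref{thm:rank}.

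\textbf{Step 4 (minimum).} On \((0,\pi)\) the function \(\sin\) is symmetric about \(\pi/2\). Over \(k=1,\ldots,P-1\), its minimum is attained at \(k=1\) and \(k=P-1\), giving \(\sigma^+_{\min}(B)=2\sin(\pi/P)\).

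The one point that needs care is justifying that the singular values of a real matrix can be computed through a complex unitary diagonalization. I would handle it directly: \(B^{\mathsf T}B=2I-\Pi_P-\Pi_P^{\mathsf T}\) is also circulant, and has the same Fourier eigenvectors with eigenvalues \(2-2\cos(2\pi k/P)\). So the eigenvalues of \(B^{\mathsf T}B\) are exactly the squared singular values, with no separate appeal to normality needed. Everything else is a routine calculation.
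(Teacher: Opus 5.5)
Your proposal is correct and follows essentially the same route as the paper: you diagonalize the circulant \(B\) by Fourier vectors, take the moduli \(|1-e^{-2\pi i k/P}|=2|\sin(\pi k/P)|\), identify \(k=0\) with the constant gauge, and place the minimum at \(k=1\) and \(k=P-1\). Your extra check through \(B^{\mathsf T}B=2I-\Pi_P-\Pi_P^{\mathsf T}\) makes the singular-value identification a little more explicit than the paper does. In Step 4, note that it is the monotonicity of \(\sin\) on \((0,\pi/2]\) and on \([\pi/2,\pi)\), not symmetry alone, that puts the minimum at these endpoints.
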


\begin{proof}
The operator \(B\) is circulant.  Its Fourier eigenvalues are
\[
  1-e^{-2\pi i k/P},
  \qquad
  k=0,\ldots,P-1.
\]
The corresponding singular values are their moduli,
\[
  |1-e^{-2\pi i k/P}|
  =
  2\left|\sin\left(\frac{\pi k}{P}\right)\right|.
\]
The \(k=0\) mode is the constant vector and gives the periodic flux gauge.
The smallest nonzero value occurs at \(k=1\) and \(k=P-1\).
\end{proof}

For algebraic dimension counting, deleting one coordinate of a zero-sum
periodic vector is sufficient.  For conditioning analysis, an orthonormal
basis of \(\mathbf 1^\perp\) gives a symmetric representation of the same
gauge quotient.  The experiments use the algebraic quotient, while
\Cref{prop:spatial-conditioning} records the intrinsic conditioning of the
periodic difference itself.

We use exact equalities for exact arithmetic and a separate tolerance model
for floating-point arithmetic.  In the numerical experiments we use a tolerance-dependent effective rank computed from a singular value decomposition (SVD).  Since
\[
  {\cal O}_L\in\mathbb R^{(L+1)P\times Pr},
\]
and since \(s_i({\cal O}_L)\) denotes the singular values of
\({\cal O}_L\), counted with multiplicity, we define
\[
  \rank_\tau({\cal O}_L)
  =
  \card\{i:s_i({\cal O}_L)>\tau\},
  \qquad
  \tau
  =
  C_{\rm svd}\epsmach
  \max\{(L+1)P,Pr\}
  \|{\cal O}_L\|_2 .
\]
Here \(C_{\rm svd}=100\) in all reported computations.  The factor
\(\max\{(L+1)P,Pr\}\) follows the usual matrix-size scaling used in
singular-value based numerical-rank decisions
\cite{TrefethenBau:1997nla,Higham:2002accuracy}.  This definition is not a
new mathematical rank; it is a declared floating-point diagnostic.

This is the appropriate way to account for machine precision.  Adding an
informal ``\(+\epsmach\)'' to every identity would obscure the difference
between exact algebra and numerical tolerance.

\section{Dissipative erasure} \label{sec:diss_eras}

For \(0<\lambda<1\), first-order upwind is dissipative.  This changes the
practical meaning of missing information.  A hidden subcell difference may
be required for exact short-time closure, yet later be damped below a
declared perturbation floor.

The result in this section is a bounded-perturbation theorem.  It does not
derive a complete implementation-specific Institute of Electrical and
Electronics Engineers (IEEE) floating-point roundoff model.  Instead, it
assumes that the computed update is the exact upwind update plus a
per-step perturbation with separately bounded mean and nonconstant
components.  This is the level of modeling used in the numerical
interpretation below.

At the level of modified-equation analysis
\cite{warming1974,harten1983,tadmor1987,MargolinRider}, the same dissipative mechanism
appears as an artificial diffusion term.  For
\[
  \lambda=\frac{a\Delta t}{h},
\]
the first-order upwind update has the formal modified equation
\[
  u_t+a u_x
  =
  \frac{ah}{2}(1-\lambda)u_{xx}
  +
  {\cal O}(h^2).
\]
Thus the contraction per update and the numerical diffusion per unit
physical time are related but distinct quantities.  This also explains why
the control case \(\lambda=1\) is nondissipative: the leading artificial
diffusion coefficient vanishes.

Let \(P_0\) denote the projection onto the spatial mean,
\[
  P_0 z=\bar z\mathbf 1,
  \qquad
  \bar z=\frac1N\sum_{i=0}^{N-1}z_i.
\]
In this section \(z^n\in\mathbb R^N\) denotes the evolving fine-grid solution
vector; it is the same type of object denoted by \(u^n\) in the numerical
experiments.

\begin{theorem}[Dissipative decay under bounded arithmetic perturbations]
\label{thm:erasure}
Let \(0<\lambda<1\), and suppose the computed solution satisfies
\[
  \tilde z^{n+1}
  =
  A_\lambda \tilde z^n+\delta^n,
\]
with
\[
  \|(I-P_0)\delta^n\|_2\le\eta,
  \qquad
  |\overline{\delta^n}|\le\eta_m.
\]
Here
\[
  \overline{\delta^n}
  =
  \frac1N\mathbf 1^\top\delta^n
\]
is the spatial mean of the arithmetic perturbation at step \(n\).

Define
\[
  \rho_N
  =
  \sqrt{
  1-4\lambda(1-\lambda)\sin^2\left(\frac{\pi}{N}\right)
  }.
\]
Then \(0<\rho_N<1\), and
\[
  \|(I-P_0)\tilde z^n\|_2
  \le
  \rho_N^n\|(I-P_0)\tilde z^0\|_2
  +
  \frac{1-\rho_N^n}{1-\rho_N}\eta .
\]
The computed mean satisfies
\[
  |\bar{\tilde z}^{\,n}-\bar{\tilde z}^{\,0}|
  \le
  n\eta_m .
\]
Finally,
\[
  \|R\tilde z^n-\bar{\tilde z}^{\,n}\mathbf 1_P\|_2
  \le
  \frac1{\sqrt r}
  \left[
  \rho_N^n\|(I-P_0)\tilde z^0\|_2
  +
  \frac{1-\rho_N^n}{1-\rho_N}\eta
  \right].
\]
\end{theorem}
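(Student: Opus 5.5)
The plan is to exploit the fact that \(A_\lambda=(1-\lambda)I+\lambda S\) is circulant on the full periodic fine grid of \(N=Pr\) cells. Since \(S\) is the cyclic shift of \(\mathbb R^N\), it is diagonalized by the discrete Fourier basis, with eigenvalues \(e^{-2\pi i k/N}\). Hence \(A_\lambda\) is normal, with eigenvalues
\[
  \mu_k=(1-\lambda)+\lambda e^{-2\pi i k/N},
  \qquad
  k=0,\ldots,N-1 .
\]
A direct computation gives
\[
  |\mu_k|^2=1-2\lambda(1-\lambda)\bigl(1-\cos(2\pi k/N)\bigr)
  =1-4\lambda(1-\lambda)\sin^2(\pi k/N).
\]
The \(k=0\) mode is the constant vector, with \(\mu_0=1\). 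For \(k=1,\ldots,N-1\), the quantity \(\sin^2(\pi k/N)\) is minimized at \(k=1\) and \(k=N-1\). Therefore the largest modulus on \(\mathbf 1^\perp\) is exactly \(\rho_N\). Because \(0<4\lambda(1-\lambda)\le 1\) and \(0<\sin^2(\pi/N)<1\) for \(N\ge 2\), we get \(0<\rho_N<1\). Since \(A_\lambda\) is normal and commutes with \(P_0\), its restriction to \(\mathbf 1^\perp\) has \(2\)-norm \(\rho_N\).

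Next I would project the perturbed recursion. Applying \(I-P_0\), which commutes with \(A_\lambda\) because \(P_0\) is itself a circulant polynomial, gives
\[
  w^{n+1}=A_\lambda w^n+(I-P_0)\delta^n,
  \qquad
  w^n=(I-P_0)\tilde z^n .
\]
Taking norms yields \(\|w^{n+1}\|_2\le \rho_N\|w^n\|_2+\eta\). A discrete Gr\"onwall induction then gives the stated bound, with the geometric sum \(\sum_{m<n}\rho_N^m=(1-\rho_N^n)/(1-\rho_N)\). For the mean, note that \(\mathbf 1^\top A_\lambda=\mathbf 1^\top\), since each column of \(A_\lambda\) sums to \(1\). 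Hence
\[
  \bar{\tilde z}^{\,n+1}=\bar{\tilde z}^{\,n}+\overline{\delta^n},
\]
and telescoping with the triangle inequality gives the \(n\eta_m\) drift bound.

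For the parent estimate, observe that \(R\mathbf 1_N=\mathbf 1_P\), so
\[
  R\tilde z^n-\bar{\tilde z}^{\,n}\mathbf 1_P=Rw^n .
\]
The operator \(R\) averages \(r\) entries per parent. By Cauchy--Schwarz, \(|(Rw)_K|^2\le \frac1r\sum_j w_{K,j}^2\), so summing over \(K\) gives \(\|R\|_2\le r^{-1/2}\). Combining this with the bound on \(\|w^n\|_2\) finishes the proof.

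The only step requiring care is the identification of the contraction factor on \(\mathbf 1^\perp\). One must use normality of the circulant to pass from the spectral radius to the operator norm, and check that the extremal nonzero mode is \(k=1\), giving \(\sin^2(\pi/N)\) rather than, for instance, a parent-scale frequency. Everything else is routine bookkeeping.
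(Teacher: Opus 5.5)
Your proposal is correct and follows essentially the same route as the paper: you diagonalize the normal circulant $A_\lambda$ in the Fourier basis with $|\mu_k|\le\rho_N$ for $k\ne0$, iterate the projected recursion $\|w^{n+1}\|_2\le\rho_N\|w^n\|_2+\eta$ to get the geometric sum, average the recursion using unit column sums of $A_\lambda$ for the mean drift, and apply Cauchy--Schwarz for $\|R\|_2\le r^{-1/2}$. One minor slip: $\sin^2(\pi/N)<1$ requires $N\ge3$ rather than $N\ge2$ (for $N=2$ and $\lambda=1/2$ one gets $\rho_N=0$), but this is harmless here because $N=Pr\ge4$.
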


\begin{proof}
First note the elementary dissipation identity.  Since the cyclic shift
\(S\) is orthogonal,
\[
  \|A_\lambda z\|_2^2
  =
  \|z\|_2^2
  -
  \lambda(1-\lambda)\|(I-S)z\|_2^2.
\]
Thus the scheme is nondissipative at \(\lambda=1\), while for \(0<\lambda<1\) every nonconstant component loses discrete \(L^2\) energy.
The Fourier argument below gives the sharp contraction rate on the zero-mean subspace.

The matrix \(A_\lambda\) is circulant and normal.  Its Fourier eigenvalues
are
\[
  \mu_k=(1-\lambda)+\lambda e^{-2\pi i k/N},
  \qquad
  k=0,\ldots,N-1.
\]
For \(k\ne0\),
\[
  |\mu_k|^2
  =
  1-4\lambda(1-\lambda)\sin^2\left(\frac{\pi k}{N}\right)
  \le
  \rho_N^2.
\]
Hence
\[
  \|A_\lambda v\|_2\le \rho_N\|v\|_2
  \qquad
  \hbox{whenever }P_0 v=0.
\]
Set \(v^n=(I-P_0)\tilde z^n\).  Then
\[
  \|v^{n+1}\|_2
  \le
  \rho_N\|v^n\|_2+\eta .
\]
Iteration gives the first estimate.  Averaging the perturbation equation
gives the mean-drift estimate.  The restriction estimate follows from
Cauchy's inequality:
\[
  \|Rv\|_2\le r^{-1/2}\|v\|_2 .
\]
\end{proof}

\begin{corollary}[Step-function flattening]
Let
\[
  z_i^0 =
  \begin{cases}
  1, & 0\le i<m,\\
  0, & m\le i<N,
  \end{cases}
  \qquad
  \alpha=\frac{m}{N}.
\]
Then
\[
  \|z^0-\alpha\mathbf 1\|_2
  =
  \sqrt{N\alpha(1-\alpha)}.
\]
Under the bounded-perturbation model of \Cref{thm:erasure},
\[
  \max_i|\tilde z_i^n-\alpha|
  \le
  \rho_N^n\sqrt{N\alpha(1-\alpha)}
  +
  \frac{1-\rho_N^n}{1-\rho_N}\eta
  +
  n\eta_m .
\]
Thus the periodic upwind solution flattens about its computed mean up to a
declared perturbation floor.  It remains close to the initial mean \(\alpha\)
only to the extent quantified by the mean-drift term \(n\eta_m\).
\end{corollary}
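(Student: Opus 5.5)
The plan is to split \(\tilde z^n-\alpha\mathbf 1\) into its nonconstant part and its mean offset, and bound each using \Cref{thm:erasure}.

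First, the initial norm. Since \(z^0\) has \(m\) entries equal to \(1\) and \(N-m\) entries equal to \(0\), its mean is \(\alpha\). The squared deviation is
\[
  \|z^0-\alpha\mathbf 1\|_2^2=m(1-\alpha)^2+(N-m)\alpha^2=N\alpha(1-\alpha)^2+N(1-\alpha)\alpha^2=N\alpha(1-\alpha),
\]
which gives the stated identity. This also shows that \((I-P_0)z^0=z^0-\alpha\mathbf 1\).

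Second, the entrywise bound. I read the statement as taking \(\tilde z^0=z^0\), so \(\bar{\tilde z}^{\,0}=\alpha\); this should be said explicitly. For each \(i\) write
\[
  \tilde z_i^n-\alpha=\bigl((I-P_0)\tilde z^n\bigr)_i+\bigl(\bar{\tilde z}^{\,n}-\bar{\tilde z}^{\,0}\bigr).
\]
Bound the first term by \(\|(I-P_0)\tilde z^n\|_\infty\le\|(I-P_0)\tilde z^n\|_2\). Then apply the first estimate of \Cref{thm:erasure} with \(\|(I-P_0)\tilde z^0\|_2=\sqrt{N\alpha(1-\alpha)}\). Bound the second term by the mean-drift estimate \(n\eta_m\). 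The triangle inequality then yields the displayed bound, and taking the maximum over \(i\) completes it.

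There is no genuine obstacle. The one point needing care is that the \(\ell^\infty\) bound is obtained crudely through the \(\ell^2\) norm; this loses a factor but is exactly what the stated inequality requires. The interpretive sentences follow directly: \(\rho_N^n\to0\) since \(0<\rho_N<1\), leaving only the floor \(\eta/(1-\rho_N)\) plus the drift \(n\eta_m\).
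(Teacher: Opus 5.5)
Your proposal is correct and follows the paper's proof step for step. It uses the same expansion of the initial squared deviation and the same split of \(\tilde z_i^n-\alpha\) into a nonconstant part, bounded via \(\|\cdot\|_\infty\le\|\cdot\|_2\) and the first estimate of \Cref{thm:erasure}, plus the mean drift, bounded by \(n\eta_m\). Your explicit assumption \(\tilde z^0=z^0\) is slightly more precise than the paper's, which states only \(\bar{\tilde z}^{\,0}=\alpha\) while also implicitly using \(\|(I-P_0)\tilde z^0\|_2=\sqrt{N\alpha(1-\alpha)}\).
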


\begin{proof}
The mean of the initial step is
\[
  \alpha=\frac1N\sum_{i=0}^{N-1}z_i^0=\frac{m}{N}.
\]
Hence
\[
  \|z^0-\alpha\mathbf 1\|_2^2
  =
  m(1-\alpha)^2+(N-m)\alpha^2.
\]
Using \(m=N\alpha\), this becomes
\[
  \|z^0-\alpha\mathbf 1\|_2^2
  =
  N\alpha(1-\alpha)^2+N(1-\alpha)\alpha^2
  =
  N\alpha(1-\alpha).
\]
Therefore
\[
  \|z^0-\alpha\mathbf 1\|_2
  =
  \sqrt{N\alpha(1-\alpha)}.
\]

By \Cref{thm:erasure},
\[
  \|\tilde z^n-\bar{\tilde z}^{\,n}\mathbf 1\|_2
  \le
  \rho_N^n\sqrt{N\alpha(1-\alpha)}
  +
  \frac{1-\rho_N^n}{1-\rho_N}\eta .
\]
Moreover,
\[
  |\bar{\tilde z}^{\,n}-\alpha|
  =
  |\bar{\tilde z}^{\,n}-\bar{\tilde z}^{\,0}|
  \le
  n\eta_m,
\]
assuming the computed initial state has mean \(\bar{\tilde z}^{\,0}=\alpha\).
For each component,
\[
  |\tilde z_i^n-\alpha|
  \le
  |\tilde z_i^n-\bar{\tilde z}^{\,n}|
  +
  |\bar{\tilde z}^{\,n}-\alpha|.
\]
Taking the maximum over \(i\), and using
\[
  \|\tilde z^n-\bar{\tilde z}^{\,n}\mathbf 1\|_\infty
  \le
  \|\tilde z^n-\bar{\tilde z}^{\,n}\mathbf 1\|_2,
\]
gives the stated estimate.
\end{proof}

\begin{remark}[Erasure time and grid dependence]
\label{rem:erasure-time}
Let
\[
  E_0=\|(I-P_0)\tilde z^0\|_2,
  \qquad
  E_{\rm floor}=\frac{\eta}{1-\rho_N}.
\]
If a tolerance \(\zeta>E_{\rm floor}\) is prescribed, then the estimate in \Cref{thm:erasure} guarantees
\[
  \|(I-P_0)\tilde z^n\|_2\le \zeta
\]
whenever
\[
  n
  \ge
  \frac{\log\left(E_0/(\zeta-E_{\rm floor})\right)}
       {-\log \rho_N}.
\]
Since
\[
  1-\rho_N
  \sim
  \frac{2\pi^2\lambda(1-\lambda)}{N^2}
  \qquad
  \text{as }N\to\infty,
\]
the erasure step count satisfies
\[
  n_{\rm erase}
  =
  O\left(
  \frac{N^2}{\lambda(1-\lambda)}
  \log\frac{E_0}{\zeta-E_{\rm floor}}
  \right),
\]
provided that \(\zeta>E_{\rm floor}\).  If \(E_0\) and \(\zeta-E_{\rm floor}\) remain bounded independently of \(N\), this reduces to the fixed-tolerance \(N^2/[\lambda(1-\lambda)]\) scaling.  If the floor \(E_{\rm floor}\) increases with \(N\), the condition \(\zeta>E_{\rm  floor}\) may eventually fail for a fixed tolerance.

The corresponding physical-time scaling is also grid dependent.  On a fixed periodic domain of length \(L_{\rm dom}\), with
\[
  h=\frac{L_{\rm dom}}{N},
  \qquad
  \Delta t=\frac{\lambda h}{a}
  =
  \frac{\lambda L_{\rm dom}}{aN},
\]
one obtains
\[
  t_{\rm erase}
  =
  n_{\rm erase}\Delta t
  =
  O\left(
  \frac{L_{\rm dom}N}{a(1-\lambda)}
  \log\frac{E_0}{\zeta-E_{\rm floor}}
  \right).
\]
Therefore, dissipative erasure is a long-time finite-grid effect, and the grid dependence must be stated explicitly.
\end{remark}

\begin{remark}
The endpoint \(\lambda=1\) is excluded from \Cref{thm:erasure}.  In that
case \(A_1=S\) is a unitary cyclic shift.  A step function translates without numerical flattening.
\end{remark}

\section{Numerical experiments}
\label{sec:num_exp}

The numerical experiments separate five effects and one consistency check.
The first experiment checks the exact rank ladder and its tolerance-dependent
effective rank.  The second displays the singular spectrum at the saturated
horizon, showing which exact directions are close to the floating-point
threshold.  The third tests the conditioning of the collar-to-queue map.  The fourth and fifth finite-volume simulations show dissipative flattening of a step function and a delayed same-parent-mean collision followed by erasure.  A
final scalar flux check verifies the one-step identity in \Cref{prop:flux-closure}.  All computations use double precision, and the SVD tolerance is declared explicitly.

\subsection{Common computational setup}

All experiments use the scalar periodic upwind update
\[
  u_i^{n+1}
  =
  (1-\lambda)u_i^n+\lambda u_{i-1}^n,
  \qquad i=0,\ldots,N-1,
\]
with periodic indexing.  The fine grid has
\[
  N=Pr
\]
cells, where \(P\) is the number of parent cells and \(r\) is the number of
fine cells per parent.  Fine cell \(i=Kr+j\) belongs to parent \(K\), with
\(j=0,\ldots,r-1\).

The parent-average restriction matrix \(R\in\mathbb R^{P\times Pr}\) is
defined by
\[
  R_{K,i}
  =
  \begin{cases}
  1/r, & Kr\le i\le Kr+r-1,\\
  0, & \text{otherwise}.
  \end{cases}
\]
The cyclic downstream shift matrix \(S\in\mathbb R^{N\times N}\) is defined
by
\[
  (Su)_i=u_{i-1},
\]
with periodic indexing.  The one-step upwind matrix is
\[
  A_\lambda=(1-\lambda)I+\lambda S.
\]
For a horizon \(L\), the observation matrix used in the rank experiments is
\[
  {\cal O}_L
  =
  \begin{bmatrix}
  R\\
  RA_\lambda\\
  \vdots\\
  RA_\lambda^L
  \end{bmatrix}
  \in
  \mathbb R^{(L+1)P\times Pr}.
\]
Thus \({\cal O}_Lx\) is the parent-average history generated by the fine
state \(x\).

All reported matrix ranks are computed in double precision.  The effective
rank is the SVD diagnostic defined in
\Cref{sec:stable-observability}.  It is tolerance-dependent and should not
be confused with the exact algebraic rank.  In the time-dependent
finite-volume experiments, the horizontal axis is the time-step index \(n\).
Since
\[
  \Delta t=\frac{\lambda h}{a},
\]
equal numbers of time steps do not represent equal physical times for
different \(\lambda\).  When \(h=1\) and \(a=1\), the physical time is
\(t_n=n\lambda\), and the number of full-domain advective periods is
\(n\lambda/N\).

\subsection{Experiment 1: exact rank versus effective rank}

Take \(P=8\), \(r=6\), and
\[
  \lambda\in\{1,0.5,0.1,0.01,0.001\}.
\]
For \(L=0,\ldots,r+3\), form
\[
  {\cal O}_L =
  \begin{bmatrix}
  R\\
  RA_\lambda\\
  \vdots\\
  RA_\lambda^L
  \end{bmatrix}.
\]
The exact theorem predicts the rank sequence
\[
  8,\;15,\;22,\;29,\;36,\;43,\;43,\ldots .
\]
The effective numerical rank is computed using the definition in
\Cref{sec:stable-observability}, namely
\[
  \rank_\tau({\cal O}_L)
  =
  \card\{i:s_i({\cal O}_L)>\tau\},
  \qquad
  \tau
  =
  100\epsmach
  \max\{(L+1)P,Pr\}
  \|{\cal O}_L\|_2 .
\]

\begin{figure}[htbp]
\centering
\includegraphics[width=0.75\textwidth]{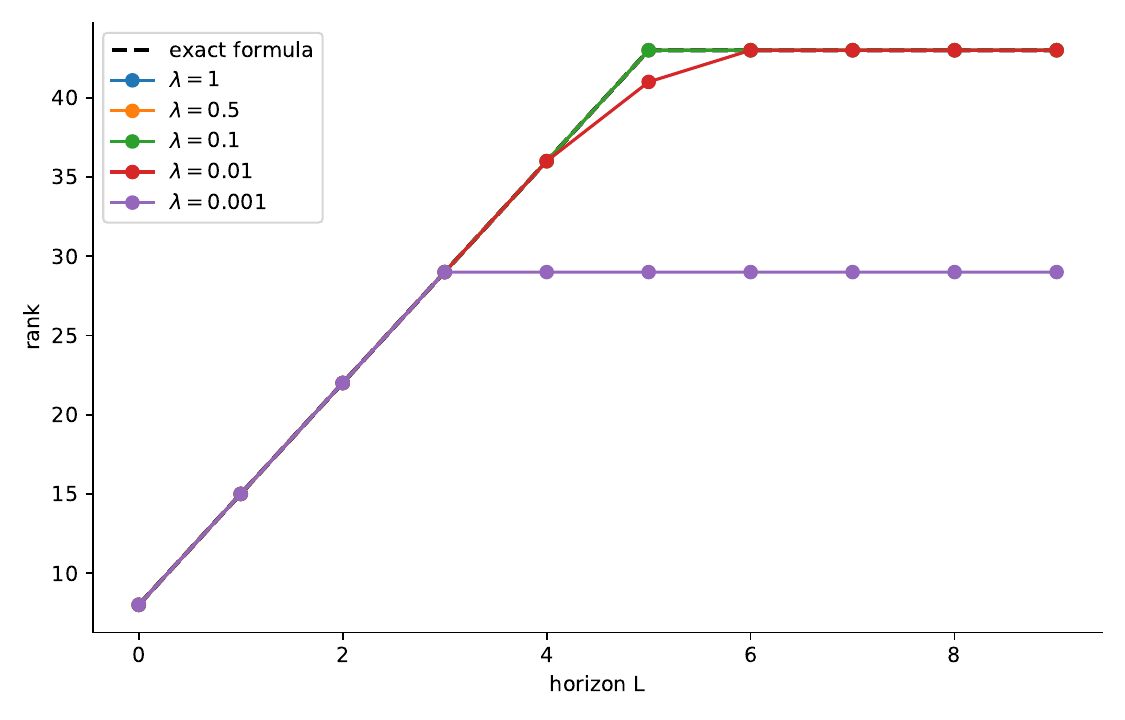}
\caption{Exact rank and effective floating-point rank of the observation
matrix \({\cal O}_L\) for \(P=8\), \(r=6\), and \(L=0,\ldots,9\).
The black dashed curve is the exact formula
\(P+(P-1)\min\{L,r-1\}\), giving the sequence
\(8,15,22,29,36,43,43,\ldots\).  The colored curves show the effective
numerical rank computed from the singular values using the tolerance
\(\tau=100\epsilon_{\rm mach}\max\{(L+1)P,Pr\}\|{\cal O}_L\|_2\).
Several colored curves are not separately visible because their effective
ranks coincide with one another, and in some cases also coincide with the
black dashed exact-rank curve.  This overlap is expected and indicates that
the declared SVD tolerance detects the full algebraic rank at those horizons.
Visible separation from the dashed curve indicates effective-rank loss.}
\label{fig:rank-effective}
\end{figure}

The diagnostic point of \Cref{fig:rank-effective} is not that the theorem
fails.  The exact rank is independent of floating-point tolerance for every
\(\lambda>0\).  The overlapping curves should be read as agreement, not as
missing data: for those horizons, the effective rank is identical for
several Courant numbers and often coincides with the exact-rank formula.
The visible departures from the dashed curve show where exact rank and
stable observable rank differ.  Small \(\lambda\) makes the deepest delayed
directions nearly invisible under the declared SVD tolerance.

One feature of \Cref{fig:rank-effective} is that the effective rank may
increase after the exact rank has already saturated.  This is possible
because later rows of \({\cal O}_L\) do not add new algebraic directions, but
they can strengthen already-visible directions.  Equivalently, the
finite-horizon observability Gramian
\[
  W_L
  =
  {\cal O}_L^\top{\cal O}_L
  =
  \sum_{t=0}^{L}
  (A_\lambda^t)^\top R^\top R A_\lambda^t
\]
satisfies
\[
  W_{L+1}
  =
  W_L
  +
  (A_\lambda^{L+1})^\top R^\top R A_\lambda^{L+1}
  \succeq W_L.
\]
Thus no observed direction becomes weaker when another observation time is
added, although the algebraic row space may already be saturated.  Longer
observation can therefore improve stable recoverability without changing
the exact rank.

\subsection{Experiment 2: saturated singular values}

The rank plot in \Cref{fig:rank-effective} compresses singular-value
information into a single integer.  To see where the effective-rank loss
comes from, we fix the horizon at the saturated value \(L=r-1=5\) and plot
the singular values of the single matrix \({\cal O}_{r-1}\)
(\Cref{fig:singular-values-saturated}).  Thus the horizontal axis in \Cref{fig:singular-values-saturated} is not time or horizon; it is the singular-value index.  For the present parameters,
\[
  {\cal O}_{r-1}\in\mathbb R^{48\times48},
  \qquad
  \rank{\cal O}_{r-1}=43,
\]
so exactly five singular values vanish in exact arithmetic.  The effective rank is the number of singular values that lie above the declared SVD tolerance.

\begin{figure}[htbp]
\centering
\includegraphics[width=0.78\textwidth]{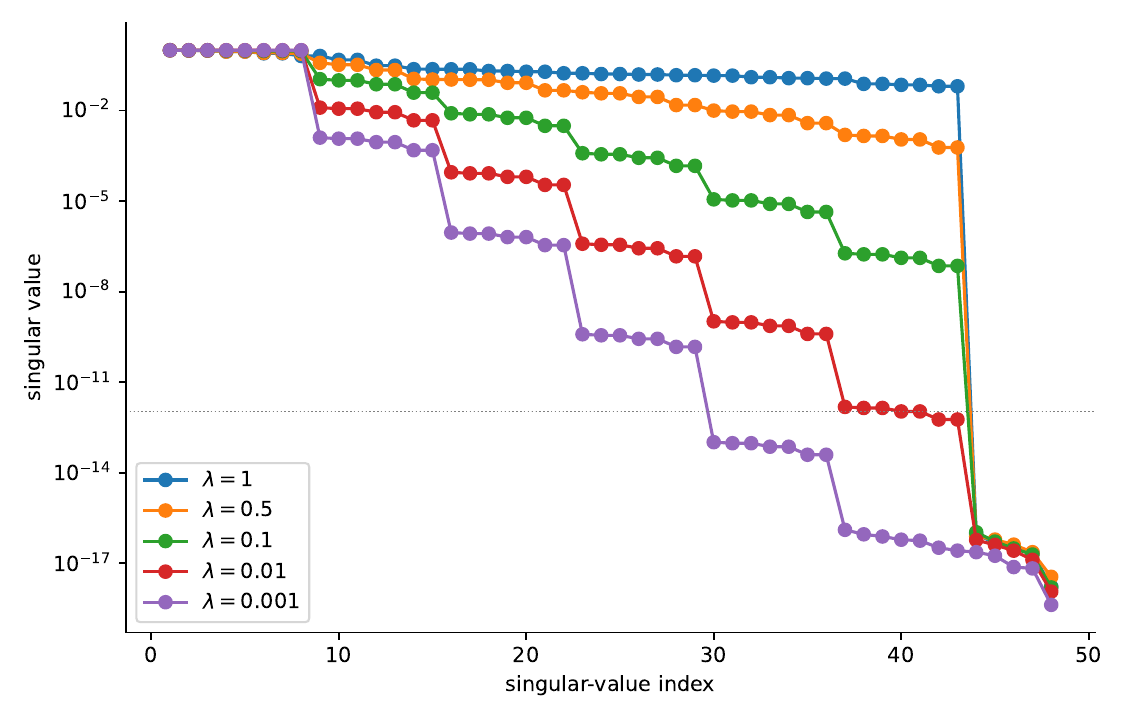}
\caption{
Singular values of the saturated observation matrix \({\cal O}_{r-1}\) for \(P=8\), \(r=6\), and \(L=r-1=5\). Unlike \Cref{fig:rank-effective}, this figure does not plot rank versus horizon.  The horizon is fixed, and the horizontal axis is the singular-value index \(i=1,\ldots,48\).  Each curve shows the singular values \(s_i({\cal O}_{r-1})\) in decreasing order for one value of \(\lambda\).  The exact saturated rank is \(43\), so five singular values are zero in exact arithmetic.  The effective rank reported in \Cref{tab:effective-rank-saturation} is obtained by counting how many singular values lie above the declared tolerance.  For small \(\lambda\),
some algebraically nonzero singular values fall below this tolerance, which explains the effective-rank loss seen in \Cref{fig:rank-effective}. The dotted horizontal line marks the corresponding SVD tolerance; for these
parameters the tolerance levels nearly coincide across \(\lambda\).
}
\label{fig:singular-values-saturated}
\end{figure}

\begin{table}[htbp]
\centering
\caption{
Effective rank at the saturated horizon \(L=r-1=5\) for \(P=8\), \(r=6\).
The exact rank is \(43\).  The effective rank uses the same SVD tolerance as
\Cref{fig:rank-effective}.  The following table quantifies how small \(\lambda\) can
make algebraically visible directions numerically invisible.
}
\begin{tabular}{@{}cccc@{}}
\toprule
\(\lambda\) & Exact rank & Effective rank & Tolerance \(\tau\) \\
\midrule
\(1\)     & \(43\) & \(43\) & \(1.066\times10^{-12}\) \\
\(0.5\)   & \(43\) & \(43\) & \(1.066\times10^{-12}\) \\
\(0.1\)   & \(43\) & \(43\) & \(1.066\times10^{-12}\) \\
\(0.01\)  & \(43\) & \(41\) & \(1.066\times10^{-12}\) \\
\(0.001\) & \(43\) & \(29\) & \(1.066\times10^{-12}\) \\
\bottomrule
\end{tabular}
\label{tab:effective-rank-saturation}
\end{table}

\Cref{tab:effective-rank-saturation} gives the numerical consequence of
the singular-value threshold at the saturated horizon.  For
\(\lambda=1\), \(0.5\), and \(0.1\), the effective rank equals the exact
rank \(43\).  For \(\lambda=0.01\), two algebraically nonzero directions
fall below tolerance, and for \(\lambda=0.001\) the effective rank drops to \(29\).  This is not a contradiction of \Cref{thm:rank}.  It shows that the
exactly visible delayed layers can be numerically unrecoverable when their
singular values are near the floating-point threshold.

The multiplier \(C_{\rm svd}=100\) in the threshold is a declared numerical
diagnostic, not a mathematical constant.  Changing it can change the
reported effective rank when singular values lie near the threshold.  For
this reason, \Cref{fig:singular-values-saturated} is included alongside the
rank table: it exposes the singular spectrum directly, so that other
reasonable thresholds can be assessed from the same data.

\subsection{Experiment 3: queue conditioning}

For \(q=1,\ldots,r-1\), construct the triangular queue map \(T_q\) from
\Cref{thm:conditioning}.  Compute
\[
  \sigma_{\min}(T_q)
  \qquad
  \text{and}
  \qquad
  \lambda^q
\]
over a logarithmic grid of \(\lambda\) values.  The figure displays the
smallest singular value and the reference scaling \(\lambda^q\).  The
condition-number behavior is given analytically in \Cref{thm:conditioning}.

\begin{figure}[htbp]
\centering
\includegraphics[width=0.75\textwidth]{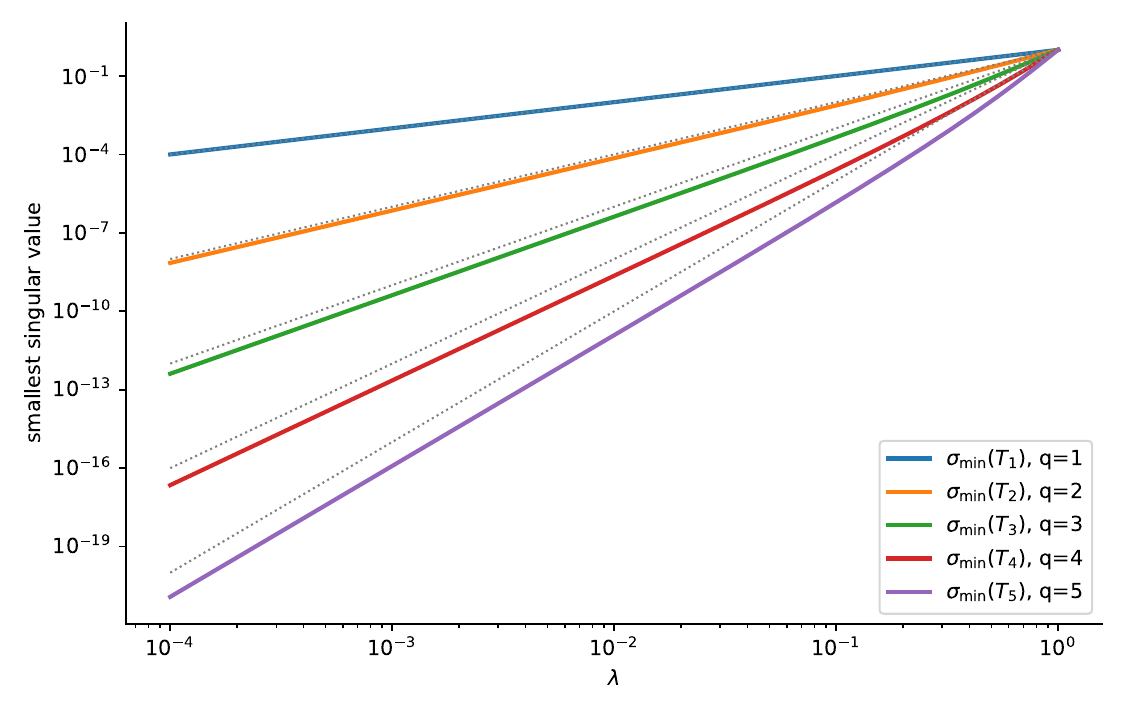}
\caption{Conditioning of the collar-to-queue map \(T_q\) from
Theorem~\ref{thm:conditioning}.  Each solid curve shows
\(\sigma_{\min}(T_q)\) as a function of \(\lambda\) for
\(q=1,\ldots,5\).  The dotted reference curves show \(\lambda^q\).
The deepest exposed collar coordinate enters the queue with a factor of
order \(\lambda^q\), so exact recovery becomes ill-conditioned when
\(\lambda\) is small.  This explains why exact observability and
floating-point effective observability can differ.}
\label{fig:queue-conditioning}
\end{figure}

\Cref{fig:queue-conditioning} gives a local explanation of the global
SVD behavior.  The curves follow the powers of \(\lambda\) predicted by
\Cref{thm:conditioning}: the deepest exposed collar coordinate enters the
queue with a coefficient of order \(\lambda^q\).  Thus the observation matrix
can have full exact rank while the information needed to recover a deep
collar layer is amplified by a large inverse factor.  The figure supports
the distinction between exact observability and stable finite-precision
observability. For very small \(\lambda\), some displayed singular values lie below the level at which double precision can be expected to provide high relative accuracy for the smallest singular value.  Those points are included only as a visual diagnostic of the trend, not as high-relative-accuracy numerical measurements.  The conclusion does not rely on those tiny computed values: the scaling and conditioning are established analytically in \Cref{thm:conditioning}.

\subsection{Experiment 4: periodic step-function flattening}

Use \(N=64\) fine cells and the periodic step
\[
  u_i^0 =
  \begin{cases}
  1, & 0\le i<N/2,\\
  0, & N/2\le i<N.
  \end{cases}
\]
In this experiment, we write the computed fine-grid solution as \(u^n\).

Run the scalar upwind update for \(\lambda=0.5\), \(\lambda=0.1\), and the
control case \(\lambda=1\).  The comparison is made per update step \(n\).  Since \(\Delta t=\lambda h/a\), the three curves do not correspond to equal physical times at the same value of \(n\).  The purpose of this experiment is to show per-update contraction and finite-grid flattening.  If one wants to compare at equal advective time, the relevant nondimensional time is \(n\lambda/N\). Record
\[
  E_2(n)=\|u^n-\bar u^n\mathbf 1\|_2,
\]
\[
  E_\infty(n)=\|u^n-\bar u^n\mathbf 1\|_\infty,
\]
\[
  \TV(n)=\sum_i |u_i^n-u_{i-1}^n|,
\]
and
\[
  M(n)=|\bar u^n-\bar u^0|.
\]

\begin{figure}[htbp]
\centering
\includegraphics[width=0.85\textwidth]{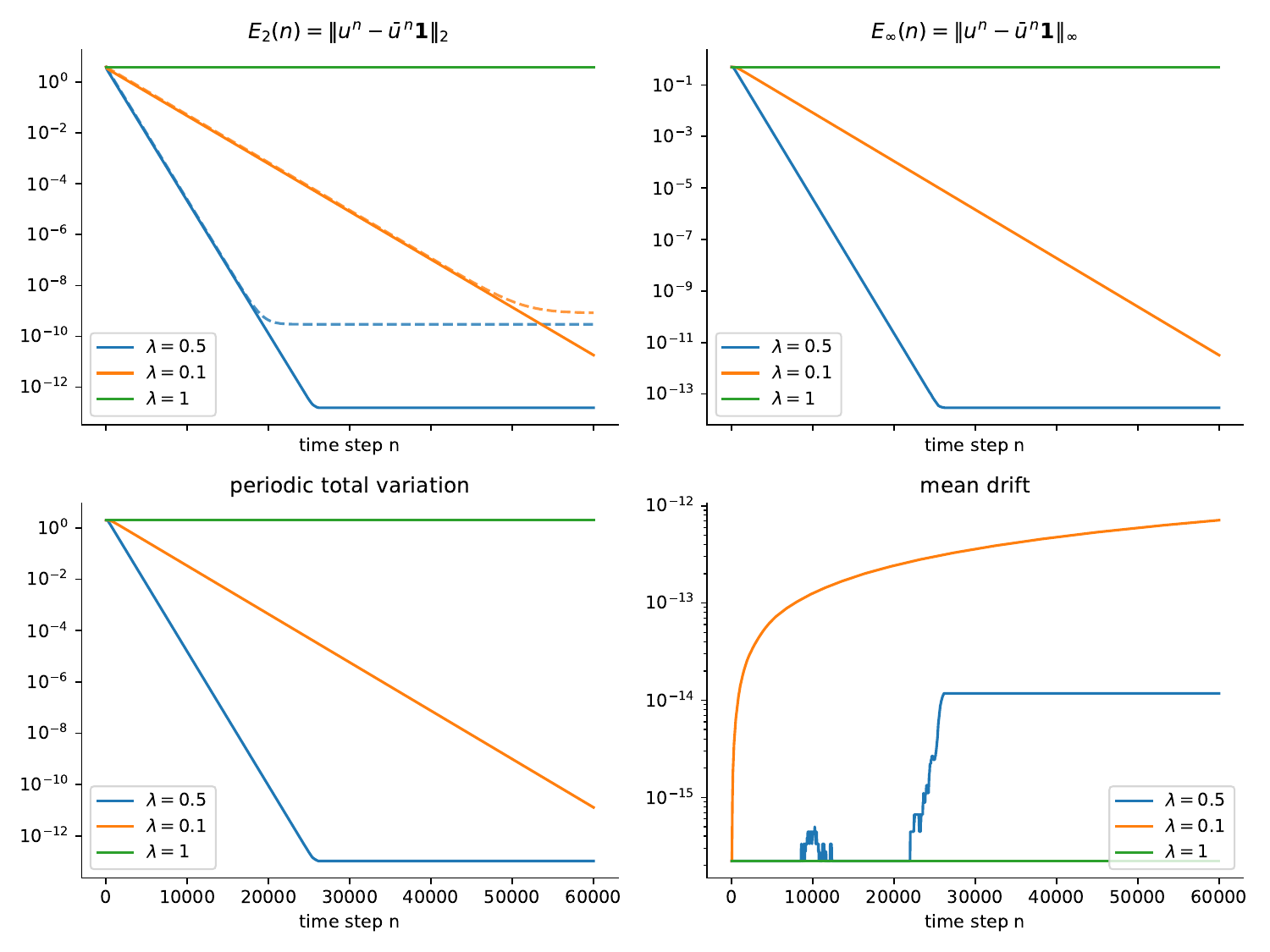}
\caption{Periodic step-function flattening for \(N=64\) fine cells and
\(\lambda\in\{0.5,0.1,1\}\).  The initial condition is \(1\) on the first
half of the periodic domain and \(0\) on the second half, so the conserved
mean is \(1/2\).  Top left: the \(L^2\) deviation
\(E_2(n)=\|u^n-\bar u^n\mathbf 1\|_2\); dashed curves show the theoretical
bounded-perturbation envelope for \(0<\lambda<1\), using
\(\eta=200\epsilon_{\rm mach}\sqrt N\).  Top right: the maximum-norm
deviation \(E_\infty(n)=\|u^n-\bar u^n\mathbf 1\|_\infty\).  Bottom left:
the periodic total variation
\(\mathrm{TV}(n)=\sum_i |u_i^n-u_{i-1}^n|\).  Bottom right: the mean drift
\(M(n)=|\bar u^n-\bar u^0|\), plotted on a logarithmic scale with a small
positive floor for display.  For \(0<\lambda<1\), the nonconstant component
decays to the declared perturbation floor.  For \(\lambda=1\), the method is a
cyclic shift and the step does not flatten.}
\label{fig:step-flattening}
\end{figure}

The results are presented in \Cref{fig:step-flattening}.  This is the
simplest finite-volume illustration of dissipative decay.  The step initially
contains sharp unresolved structure.  First-order upwind smears that
structure for \(0<\lambda<1\), and the nonconstant component decays about
the computed mean until it reaches the perturbation floor.  The dashed
curves in the \(E_2\) panel use
\[
  \eta=200\epsilon_{\rm mach}\sqrt N.
\]
This value is a conservative diagnostic perturbation floor used to visualize
the bounded-perturbation estimate.  It is not derived as a sharp
implementation-specific IEEE 754 floating-point rounding constant.  The \(\lambda=1\) run is important
because it separates advection from dissipation: in that case the method is
a cyclic shift and no flattening occurs.

\begin{table}[htbp]
\centering
\caption{
Final diagnostics for the periodic step-function experiment at \(n=60{,}000\). The dissipative cases \(0<\lambda<1\) approach a finite-precision floor, while the control case \(\lambda=1\) preserves the nonconstant profile by cyclic transport.
}
\begin{tabular}{@{}ccccc@{}}
\toprule
$\lambda$ & Final $E_2$ & Final $E_\infty$ & Final TV & Max mean drift \\
\midrule
1    & $4.000\times10^{0}$   & $5.000\times10^{-1}$  & $2.000\times10^{0}$   & $0$ \\
0.5  & $1.537\times10^{-13}$ & $2.909\times10^{-14}$ & $1.048\times10^{-13}$ & $1.177\times10^{-14}$ \\
0.1  & $1.815\times10^{-11}$ & $3.206\times10^{-12}$ & $1.282\times10^{-11}$ & $7.110\times10^{-13}$ \\
\bottomrule
\end{tabular}
\label{tab:step-final-diagnostics}
\end{table}

The values in Table~\ref{tab:step-final-diagnostics} quantify the visual
message of \Cref{fig:step-flattening}.  The nondissipative case
\(\lambda=1\) preserves the step by cyclic transport: the final \(E_2\),
\(E_\infty\), and total variation are unchanged from their initial values.
For \(\lambda=0.5\), the nonconstant component is reduced to roundoff-level
values by \(n=60{,}000\).  For \(\lambda=0.1\), the decay per update is
slower, as predicted by the larger value of \(\rho_N\), but the same
flattening mechanism is observed.  In all cases the measured mean drift
remains at roundoff level, so the observed flattening is about a computed
mean that remains very close to the initial mean in these runs.

\subsection{Experiment 5: delayed collision and erasure}

This experiment addresses the main closure question directly.  Take
\(P=8\), \(r=8\), \(N=64\), \(\lambda=0.5\), and construct two states \(x^0\)
and \(y^0\) with the same parent averages.  We use \(m=3\), \(\ell=r-1-m=4\), and perturbation amplitude
\(\delta_0=0.25\).  The reference state is \(y_i^0\equiv0.5\).  The
perturbed state \(x^0\) equals \(y^0\) except in parent \(K=0\), where
\[
  x_{0,0}^0 = 0.5-\delta_0,
  \qquad
  x_{0,\ell}^0 = 0.5+\delta_0.
\]
All other child values are \(0.5\).  Hence \(Rx^0=Ry^0\).  The compensated
perturbation remains inside the same parent through times \(0,\ldots,m\),
so the parent histories agree initially.  The first possible parent-level
separation occurs at time \(m+1=4\), when part of the perturbation has
reached the parent interface.

Measure
\[
  D(n)=\|RA_\lambda^n x^0-RA_\lambda^n y^0\|_2 .
\]

\begin{figure}[htbp]
\centering
\includegraphics[width=0.85\textwidth]{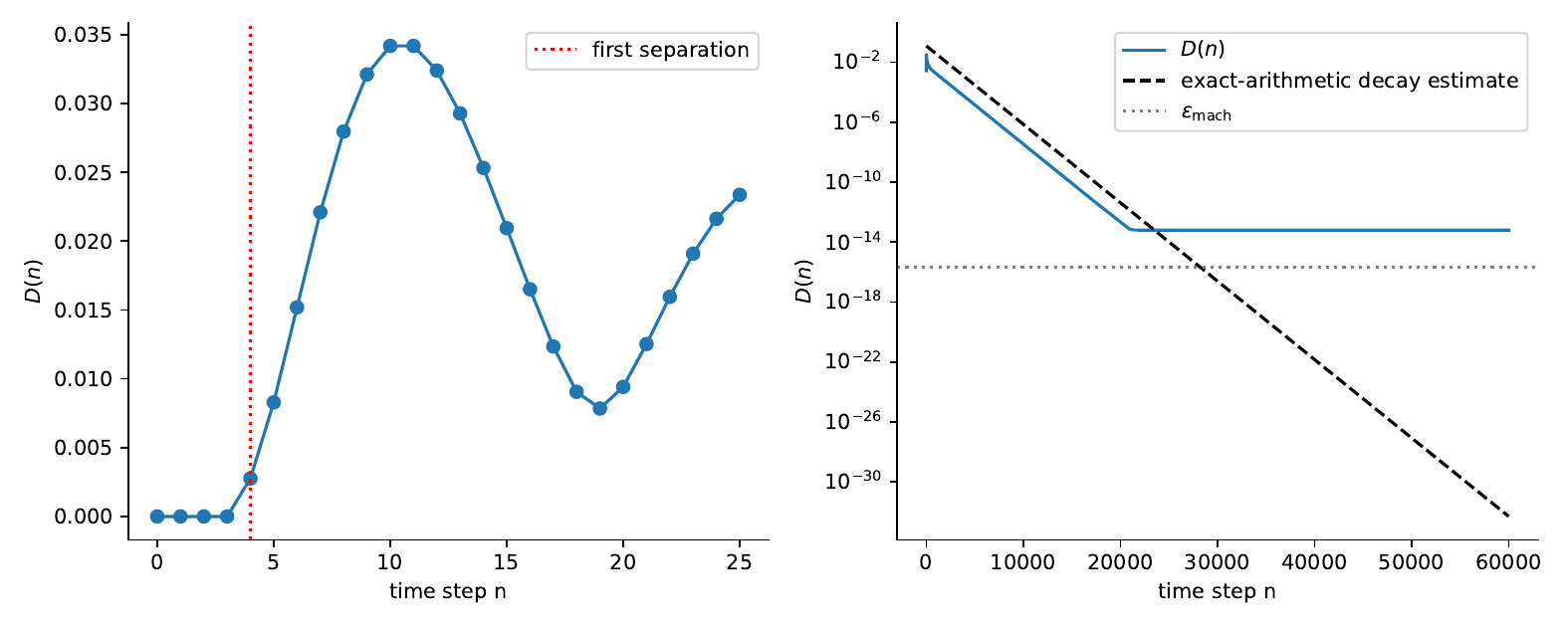}
\caption{Delayed collision and dissipative decay for \(P=8\), \(r=8\),
\(N=64\), and \(\lambda=0.5\).  The two initial fine states have identical
parent averages, \(Rx^0=Ry^0\), but differ inside one parent by a compensated
subcell perturbation.  The plotted quantity is
\(D(n)=\|RA_\lambda^n x^0-RA_\lambda^n y^0\|_2\), the difference between the
two parent-average states at time step \(n\).  Left: short-time behavior.
The red dotted line marks the first possible separation time \(n=m+1=4\).
Before this time, the hidden perturbation has not yet reached a parent
interface, so the parent averages agree.  After this time, the hidden
subcell information becomes visible at the coarse level.  Right: long-time
behavior on a logarithmic scale.  The blue curve decays because the upwind
scheme is dissipative.  The dashed curve is the exact-arithmetic decay
estimate \(r^{-1/2}\rho_N^n\|x^0-y^0\|_2\). Once this estimate falls below the accumulated roundoff scale, the computed
curve reaches an implementation-dependent numerical plateau.  The gray dotted
line marks \(\epsilon_{\rm mach}\) only as a unit-roundoff reference; it is
not expected to coincide with the observed plateau.}
\label{fig:delayed-collision}
\end{figure}

\Cref{fig:delayed-collision} combines the two main mechanisms in the paper. The left panel shows that mean-only closure is not exact: two fine states with identical parent averages can produce different future parent averages after a finite delay.  The right panel shows that exact memory and practical long-time memory are different.  Once the hidden perturbation becomes visible at the parent level, the dissipative upwind scheme contracts it.  In exact
arithmetic the decay estimate continues toward zero, while in double precision the computed difference reaches an implementation-dependent
roundoff plateau.  Thus the same perturbation is first hidden, then observable, and finally reduced to the numerical floor of the computation.

\subsection{Verification check: one-step flux closure}

A final sanity check uses random scalar fine data and verifies the identity
in \Cref{prop:flux-closure}.  For the scalar upwind scheme,
\[
  \Phi_{K}=\lambda x_{K,r-1}
\]
is the realized right-face flux of parent \(K\).  The parent update computed
by restricting the fine solution is compared with
\[
  y_K^{n+1}=y_K^n-\frac1r(\Phi_K-\Phi_{K-1}).
\]
The check uses \(P=10\), \(r=4\), \(N=40\), and \(\lambda=0.37\).  The fine
states are drawn independently from the standard normal distribution using
NumPy's default random-number generator with seed \(12345\).  For each of
100 trials, the discrepancy is measured in the maximum norm over parent averages:
\[
  \left\|
  R A_\lambda x
  -
  \left(
  Rx-\frac1r(\Phi-\operatorname{roll}(\Phi,1))
  \right)
  \right\|_\infty .
\]
The maximum discrepancy over the 100 trials was \(4.44\times10^{-16}\) (twice \(\epsmach\)), well below \(10^{-12}\) and consistent with roundoff. This check is included only as software validation of the telescoping
identity in \Cref{prop:flux-closure}.  The substantive numerical evidence is provided by the rank, conditioning, step-flattening, and delayed-collision
experiments.

Taken together, the experiments support the three main claims of the paper. The rank and singular-value experiments compare the numerical construction against the exact memory formula and show where effective rank can differ from exact rank.  The queue-conditioning experiment identifies the local temporal mechanism responsible for loss of stable observability, while \Cref{prop:spatial-conditioning} records the separate spatial conditioning introduced by the periodic difference operator.  The step-function and delayed-collision experiments then show the finite-volume consequences: unresolved information may be needed for exact short-time pathwise closure, but for \(0<\lambda<1\) the same information can be dissipated below a declared numerical tolerance.

\section{Discussion} \label{sec:discussion}

The results clarify a distinction that is easy to miss in coarse CFD and
super-resolution settings.  Conservation at one time is not the same as
pathwise predictive closure.  A reconstruction may exactly preserve parent
averages, yet place material differently inside a parent.  In an upwind
finite-volume method, those placements determine when unresolved data reach
parent interfaces.  Exact future parent means can therefore require memory
beyond the current parent means.

The numerical results in \Cref{sec:num_exp} are meant to be read in the same order as
the analysis.  \Cref{fig:rank-effective} and \Cref{tab:effective-rank-saturation} show the gap between exact rank and tolerance-dependent effective rank.  \Cref{fig:singular-values-saturated}
shows this gap at the singular-value level, while
\Cref{fig:queue-conditioning} identifies the triangular delayed-queue
map as the local source of poor conditioning.  \Cref{fig:step-flattening}
and \Cref{tab:step-final-diagnostics} show the dissipative decay predicted by \Cref{thm:erasure}.  Finally, \Cref{fig:delayed-collision} combines the two themes: hidden subcell information can affect future parent means after a delay, yet its effect can subsequently decay because of numerical dissipation.

The rank formula gives the exact algebraic requirement for the scalar
periodic upwind hierarchy.  Each newly exposed child layer contributes
\(P-1\) independent periodic differences.  The missing one dimension at each level is the constant periodic flux gauge: adding the same face-flux value everywhere does not change any parent average.  At saturation, a centralized encoder saves exactly \(r-1\) coordinates compared with raw local queues.

The conditioning theorem gives a different message.  Although every positive \(\lambda\) exposes the same algebraic directions, the deepest exposed layer can appear with coefficient \(\lambda^q\).  For small \(\lambda\), a direction may be visible in exact arithmetic and useless in double precision.  This is why the effective rank experiment is necessary.  A numerical paper should not report exact dimension without discussing conditioning.

The bounded-perturbation decay theorem gives the final piece.  For
\(0<\lambda<1\), the upwind method is diffusive.  Nonconstant fine-grid
information contracts at rate \(\rho_N\), up to the declared perturbation floor.  Thus exact short-time memory and long-time practical memory can differ.  The step-function experiment shows this visibly: the sharp initial discontinuity is flattened about the computed mean, whose drift is measured to remain at roundoff level in the reported runs.  The delayed-collision experiment combines both effects: two fine states with equal parent averages eventually separate at the coarse level, then their difference is dissipated.

These results do not contradict the value of learned closures, super-resolution, or high-order CFD.  They say that a meaningful claim about
coarse prediction must specify the information available to the model, the
prediction horizon, the tolerance, the numerical method, and the error
criterion.  Recent CFD and AI studies use different information sets and
different goals, including reconstruction quality, short-time forecasting, statistical fidelity, entropy stability, or engineering prediction
\cite{Sofos:2025spatiotemporal,
Liu:2026entropyCFN,Charles:2025dspweno}.  The present theorem concerns exact
pathwise parent-mean prediction in one scalar finite-volume hierarchy.

\subsection{Limitations and future work}
\label{sec:limitations}

The analysis is intentionally narrow.  The grid is uniform and periodic, the
dynamics are linear, the time integrator is forward Euler, and the numerical
flux is first-order upwind.  The encoder lower bounds are topological
dimension statements for continuous real-valued encoders on open sets.  They
are not bit-complexity bounds, statistical learning bounds, or claims about
arbitrary discontinuous encoders.  The results also do not establish the same
rank formula for nonlinear conservation laws, compressible Euler dynamics,
nonuniform or unstructured grids, nonperiodic boundaries, high-order
reconstruction, Runge--Kutta stages, limiters, entropy constraints, stochastic
closures, or learned models.

These limitations are useful because they identify the next problems.  A
natural extension is to repeat the same program for higher-order scalar
schemes: identify the exact observable subspace, quantify its conditioning,
and test which parts are preserved or erased by numerical dissipation.
Another direction is to study nonperiodic boundaries, where the periodic flux
gauge is replaced by boundary-dependent information.  A carefully scoped
Euler study should begin with the one-step flux identity of
\Cref{prop:flux-closure}, then ask whether a finite predictive register can
be defined for a declared solver, limiter, and time integrator.  More complex
finite-volume closures may involve flux histories, interface states,
residual stresses, or memory terms
\cite{ChorinHaldKupferman:2002memory,ParishDuraisamy:2017nonmarkovian,
Agdestein:2026exactfvclosure}.  The same three questions should guide such
extensions: what is the exact predictive state, how well conditioned is it,
and which parts of it survive numerical dissipation?

\section{Conclusions} \label{sec:conc}

Coarse finite-volume averages are generally not exact predictive states for
the coarse evolution of a realized fine solution.  In the periodic scalar
upwind hierarchy studied here, the exact finite-horizon memory requirement
is
\[
  (P-1)\min\{L,r-1\}
\]
real coordinates beyond the current parent averages.  A flux-divergence
queue attains this bound, and periodicity removes one common flux-gauge
coordinate per queue time.

The main point is not only the rank formula.  Exact rank, stable recovery, and long-time numerical relevance are distinct.
Delayed collar information enters the queue with powers of \(\lambda\), so
algebraically visible modes can fall below an SVD tolerance when \(\lambda\)
is small.  For \(0<\lambda<1\), the same upwind method dissipates nonconstant modes, so unresolved subcell information that matters for exact short-time closure may later become irrelevant below a declared perturbation floor.

The numerical experiments separate these effects. The rank and singular value tests reproduce the predicted rank ladder and display the effective-rank loss. The queue-conditioning test explains the loss through the triangular
collar-to-queue map.  The step-function test shows flattening about the computed mean for \(0<\lambda<1\), while the \(\lambda=1\) control confirms that advection without numerical diffusion does not erase the step.  The delayed-collision test shows the closure mechanism directly: same-parent-mean states can separate at the coarse level after hidden subcell data reaches an interface, and the resulting difference is then damped.

The practical lesson is that a coarse model should not be judged only by
whether it preserves parent averages at one time.  One must also specify the
hidden information available to the model, the prediction horizon, the
floating-point tolerance, the numerical method, and whether the method
preserves or erases unresolved information over that horizon.  This paper establishes that perspective in a scalar periodic model; the limitations and extensions identified in \Cref{sec:limitations} describe the natural next steps.

\section*{Declarations}

\subsection*{Funding}
No external funding was received for this work.

\subsection*{Competing interests}
The authors declare that they have no financial or non-financial interests
that are directly or indirectly related to the work submitted for
publication.

\subsection*{Author contributions}
A.P. contributed to conceptualization, interpretation, verification, and writing.  N.C. contributed to conceptualization, mathematical analysis, numerical analysis, software, interpretation, verification, and writing.  D.D. contributed to conceptualization, numerical analysis, interpretation, verification, and writing.

\section*{Acknowledgments}

The authors used generative-AI tools in the course of this work, for
literature discovery, adversarial review of the mathematical
arguments, copyediting suggestions, and assistance with experiment
and verification code.  The research questions, results, and
conclusions are the authors' own.  The authors verified the cited
sources, mathematical arguments, numerical results, and final text,
and assume responsibility for all content.

\section*{Data and code availability}

The complete reproducibility package for this study is publicly
available in the Certified Simulation repository of the University of
Nicosia,
\url{https://github.com/UniversityOfNicosia/certified-simulation},
under \texttt{papers/finite-horizon-memory}.  The package contains
the Python scripts that generate every matrix test, scalar
finite-volume experiment, CSV file, LaTeX table fragment, and figure
reported in the manuscript, together with instructions for
reproducing the results, and executable certificates that replay the
paper's exact results in exact rational arithmetic.  No external CFD
solver or proprietary software is required.

\bibliography{sn-bibliography}

\end{document}